\newcommand{\vp}{\varepsilon}
\theoremstyle{plain}
\newtheorem{thm}{Theorem}
\newtheorem{lem}{Lemma}
\newtheorem{cor}{Corollary}
\newtheorem{pro}{Proposition}
\theoremstyle{definition}
\newtheorem{defn}{Definition}
\theoremstyle{remark}
\begin{document}

\title{Invariant measures involving local inverse iterates}

\author{Eugen Mihailescu}
\date{}
\maketitle

\begin{abstract}

We study new invariant probability measures, describing the
distribution of multivalued inverse iterates (i.e of different
local inverse iterates) for a non-invertible smooth function $f$
which is hyperbolic, but not necessarily expanding
 on a repellor $\Lambda$. The methods for the higher dimensional non-expanding and non-invertible case are different than the ones for
 diffeomorphisms, due to the lack of a nice unstable foliation (local unstable manifolds depend on prehistories and may
intersect each other, both in $\Lambda$ and outside $\Lambda$),
and the fact that Markov partitions may not exist on $\Lambda$. We
obtain that for Lebesgue almost all points $z$ in a neighbourhood
$V$ of $\Lambda$, the normalized averages of Dirac measures on the
consecutive preimage sets of $z$ converge weakly to an equilibrium
measure $\mu^-$ on $\Lambda$; this implies that $\mu^-$ is a
physical measure for the local inverse iterates of $f$. It turns
out that $\mu^-$ is an inverse SRB measure in the sense that it is
the only invariant measure satisfying a Pesin type formula for the
negative Lyapunov exponents. Also we show that $\mu^-$ has
absolutely continuous conditional measures on local stable
manifolds, by using the above convergence of measures. Several
classes of examples of hyperbolic non-invertible and non-expanding
repellors, with their inverse SRB measures, are given in the end.

\end{abstract}

\textbf{Mathematics Subject Classification 2000:} 37D35, 37A60, 37D20.

\textbf{Keywords:} Hyperbolic non-invertible maps (endomorphisms), repellors, SRB measures for endomorphisms,
physical and equilibrium measures.

\section{Introduction}\label{sec1}

SRB measures (Sinai, Ruelle, Bowen) and physical measures have
been studied for many classes of dynamical systems having some
form of hyperbolicity, either uniform, partial or non-uniform.
Intuitively physical measures describe the distributions of
forward iterates in a neighbourhood of an attractor.
 For uniformly hyperbolic systems in the vicinity of an attractor, the existence of physical measures and many
 of their properties were proved by Bowen (\cite{Bo}). SRB measures are usually defined by the absolute continuity
 of their conditional measures on local unstable manifolds (\cite{Y}). The term \textbf{physical measures} was introduced
 by Eckmann and Ruelle (\cite{ER}) who also proved many of their properties and gave relations to examples from physics
  (turbulence theory, statistical mechanics, strange attractors, etc.) Measure-theoretic entropy and Lyapunov
  exponents prove to be very important with regard to physical and SRB measures, as in Pesin's entropy formula
  (\cite{ER}, \cite{LY}, \cite{PDL}, \cite{Y}, etc.) For uniformly hyperbolic dynamical systems having an
  attractor $\Lambda$, physical measures are in fact SRB measures as was proved by Sinai, Ruelle, Bowen
  (\cite{Bo}, \cite{Y}). For other systems there may exist physical measures which are not SRB (as in \cite{ER}).
In \cite{ER}, it was studied mainly the case of attractors for diffeomorphisms or the case of a flow indexed with
 both positive and negative parameters $t$. In such a case the inverse of the map is well defined and it is also a
 smooth map. For flows we simply can take $f^t, t <0$. One cannot do the same if the dynamical system is not invertible.

In this paper we focus on finding physical measures giving the
\textbf{distribution of consecutive preimage sets} for
non-invertible smooth maps (such maps will be called
\textbf{endomorphisms}), in the vecinity of a hyperbolic repellor.
There are many examples of systems which are not invertible, for
instance the non-invertible horseshoes from \cite{Bot},
s-hyperbolic holomorphic maps in several dimensions and their invariant sets (\cite{Mi}), skew products
having a finite iterated function system in the base and overlaps
in their fibers, hyperbolic toral endomorphisms, baker's
transformations with overlaps, etc. By similarity to the SRB measure (\cite{Bo}, \cite{Y}), one natural question would be to study the
\textbf{distribution of various preimages near a hyperbolic repellor} $\Lambda$.
The problem is that there is \textbf{no} unique inverse $f^{-1}$;
instead, if $f$ does not have any critical points near the
$\Lambda$, we will obtain local inverse iterates, or equivalently
a \textbf{multivalued inverse iterate} of $f$. If $f$ is locally
$d$-to-1 on a basic set $\Lambda$, and if the local inverse
iterates of $f$ on some open set $W$ are denoted by $f^{-1}_{W,
1}, \ldots, f^{-1}_{W, d}$, then the multivalued inverse of $f$ on
$W$ is $(f^{-1}_{W, 1}, \ldots, f^{-1}_{W, d})$. Knowing the
behaviour of inverse trajectories of a system may be
important when we want to obtain information about the past
states of the system.

It is important to keep in mind that the map $f$ \textbf{is not assumed
expanding} on $\Lambda$; indeed for the expanding case a lot is
known about the distribution of preimages (see \cite{Ma}, \cite{Ru-exp}) and the situation is characterized by the fact
that local inverse iterates decrease exponentially fast the diameter of small balls;
this guarantees that we have bounded distortion lemmas. However in the general
higher dimensional non-invertible hyperbolic case we do not have control on the distortion of small balls under local inverse
iterates; indeed they may increase in the stable direction in backward time.

Non-invertibility brings many difficulties into the setting, like not being able to apply directly Birkhoff
Ergodic Theorem for $f^{-1}$ like in the case of diffeomorphisms, the non-existence of a Markov partition of
 $\Lambda$ (as $f$ is just an endomorphism, not necessarily expanding on $\Lambda$), etc. One classical tool when dealing with endomorphisms would be
 to use the natural extension $\hat \Lambda$ of $\Lambda$ (also known as the inverse limit), but then one looses differentiability properties near
 $\Lambda$, as $\hat \Lambda$ is not a manifold.
  In general for endomorphisms, local unstable manifolds depend on whole prehistories not only on the base points
  (\cite{R}); this dependence is Holder continuous with respect to prehistories (\cite{M}).
   Our repellors will be in fact unions of global stable sets, but the overlappings and foldings of the system introduce a complicated and very irregular dynamics.
   Moreover the number of preimages belonging to $\Lambda$ of a given point may vary a priori along $\Lambda$.

For attractors/repellors $\Lambda$ for diffeomorphisms $f$ we know that there exists an SRB/inverse SRB measure on $\Lambda$ and that $(\Lambda, f|_\Lambda)$ becomes a Bernoulli 2-sided transformation (\cite{Bo}, \cite{Ma}). This is based mainly on the existence of Markov partitions in the invertible case. Also for expanding maps there exist Markov partitions (\cite{Ru-carte}, \cite{Ru-exp}) and the system is isomorphic to a 1-sided Markov chain. In the non-invertible non-expanding case we however do not have Markov partitions, as mentioned above.

The \textbf{main directions and results} of the paper are the following:

First we will specify what we understand by a repellor, in
Definition \ref{repellor}. We prove that on a repellor $\Lambda$, the number of preimages belonging to $\Lambda$ of any $x \in \Lambda$ is locally constant. We also show a very important property
of these sets, namely the stability under perturbations, in
Proposition \ref{stability}. Then we prove in Theorem
\ref{pressure} that the pressure of the stable potential $\Phi^s$
 along a connected repellor $\Lambda$ is related to the number $d$ of
preimages of an arbitrary point, which remain in $\Lambda$.

We will define next the probability measures $$\mu^z_n:= \frac
{1}{d^n} \mathop{\sum}\limits_{y \in f^{-n}z \cap U} \frac 1n
\mathop{\sum}\limits_{i=0}^{n-1} \delta_{f^i y}, n \ge 1, z \in V
\subset U,$$ where $V, U$ are close enough neighbourhoods of
$\Lambda$. In Theorem \ref{physical} we give the main result,
namely the weak convergence of the measures $\mu_n^z$ towards the
equilibrium measure $\mu_s$ of the potential $\Phi^s$, for
Lebesgue almost all points $z \in V$. In this Theorem, $\Lambda$
will be assumed connected (not a very restrictive assumption for
our notion of repellor, as will be seen). We show then in Theorem
\ref{cond} that a Pesin type formula involving the
\textit{negative Lyapunov exponents} can be derived for this

physical measure $\mu^-= \mu_s$. This will give also the absolute
continuity of conditional measures of $\mu^-$ on stable manifolds,
by using the convergence of measures of Theorem \ref{physical} and
a result of Liu (\cite{PDL}) relating entropy, folding entropy and
negative Lyapunov exponents. In fact by using the convergence of
the measures $(\mu^z_n)_n$ from Theorem \ref{physical}, we show
that the folding entropy $H_{\mu^-}(\epsilon/f^{-1}\epsilon)$ is
equal to $\log d$, where $\epsilon$ is the partition of $\Lambda$
into single points. Therefore by all these properties, it follows
that $\mu^-$ can be viewed as an \textbf{inverse SRB measure}.

The above inverse Pesin type formula will imply in Theorem
\ref{Bernoulli} that the repellor $\Lambda$ with its inverse SRB
measure $\mu^-$ is not isomorphic to a one-sided Bernoulli shift.
This is in contrast with the case of attractors for
diffeomorphisms where the attractor, together with its SRB
measure, is 2-sided Bernoulli. We show however in Theorem
\ref{exact} that $\mu^-$ has Exponential Decay of Correlations on
Holder potentials.

Finally we describe some classes of examples in Section
\ref{sec3}, among which hyperbolic toral endomorphisms, other
Anosov endomorphisms, as well as new classes of non-expanding
repellors which are not Anosov, together with their inverse SRB
measures.

\section{Main results.}\label{sec2}

First we will specify what do we understand by \textit{repellor}.
As a \textbf{general setting} throughout the paper, we consider $f:M \to M$ a smooth (say $\mathcal{C}^2$) map on a Riemannian manifold,
and $\Lambda$ an $f$-invariant compact set in $M$ which does not intersect the critical set $C_f$ of $f$. We remark that the preimages of a point from $\Lambda$ do not have to remain in
$\Lambda$ necessarily. Also let us notice that if $C_f$ would intersect $\Lambda$, the basic ideas would remain the same
as long as  we assume an integrability condition on $\log |Df_s|$ over $\Lambda$.

\begin{defn}\label{repellor}

Let $f: M \to M$ be a smooth (for example $\mathcal{C}^2$) map on a Riemannian manifold and let $\Lambda$ be a compact set
which is $f$-invariant (i.e $f(\Lambda) = \Lambda$) and s.t $f|_\Lambda$ is topologically transitive; \ assume also that there exists a neighbourhood $U$ of $\Lambda$ such that $\Lambda = \mathop{\cap}\limits_{n \in \mathbb Z} f^n U$. Such a set will be called a \textit{basic set} for $f$ (\cite{KH}). We say that
 $\Lambda$ is a \textbf{repellor} for $f$ if $\Lambda$ is a basic set for $f$, $C_f \cap \Lambda = \emptyset$ and if there exists a neighbourhood $U$ of $\Lambda$ such that $\bar U \subset f(U)$. $\hfill\square$

\end{defn}

We will call any point $y \in f^{-1}(x)$ an $f$-\textit{preimage} of $x\in M$; and by $n$-\textit{preimage}
of $x$ we mean any point $y \in f^{-n}(x)$, for an integer $n >0$.

\begin{pro}\label{constant}

In the setting of Definition \ref{repellor}, if $\Lambda$ is a repellor for $f$, then $f^{-1}\Lambda \cap U = \Lambda$.
If moreover $\Lambda$ is assumed to be connected, the number of $f$-preimages that a point has in $\Lambda$ is constant.
\end{pro}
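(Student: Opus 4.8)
The plan is to establish the two assertions in turn, the second one resting on the first.

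\emph{Step 1: the identity $f^{-1}\Lambda\cap U=\Lambda$.} The inclusion $\Lambda\subset f^{-1}\Lambda\cap U$ is immediate, since $f(\Lambda)=\Lambda$ gives $\Lambda\subset f^{-1}\Lambda$, and $\Lambda\subset U$ by hypothesis. For the reverse inclusion, fix $y\in U$ with $f(y)\in\Lambda$; I would check that $y\in f^{n}U$ for every $n\in\mathbb Z$, so that $y\in\bigcap_{n\in\mathbb Z}f^{n}U=\Lambda$. For $m\ge1$ one has $f^{m}(y)=f^{m-1}(f(y))\in f^{m-1}(\Lambda)=\Lambda\subset U$, and also $y\in U$, so $y\in f^{-m}U$ for all $m\ge0$. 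For $m\ge1$ one produces an $m$-preimage of $y$ inside $U$ by iterating the repellor condition $U\subset\bar U\subset f(U)$: choose $w_{1}\in U$ with $f(w_{1})=y$, then $w_{2}\in U$ with $f(w_{2})=w_{1}$, and so on, obtaining $w_{m}\in U$ with $f^{m}(w_{m})=y$, i.e. $y\in f^{m}U$. Hence $y\in\Lambda$.

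\emph{Step 2: finiteness and the counting function.} For $x\in\Lambda$ the identity of Step 1 gives $f^{-1}(x)\cap U=f^{-1}(x)\cap\Lambda$, since $f^{-1}(x)\cap U\subset f^{-1}\Lambda\cap U=\Lambda$; thus the $f$-preimages of $x$ lying in $\Lambda$ are exactly those lying in $U$. Because $C_{f}$ is closed and $C_{f}\cap\Lambda=\emptyset$, there is an open set $W$ with $\Lambda\subset W\subset U$ on which $f$ is a local diffeomorphism; so $f$ is injective near each point of $\Lambda$, and $f^{-1}(x)\cap\Lambda$, being a closed discrete subset of the compact set $\Lambda$, is finite. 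Put $\nu(x):=\#\bigl(f^{-1}(x)\cap\Lambda\bigr)$.

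\emph{Step 3: $\nu$ is locally constant, hence constant.} Fix $x_{0}\in\Lambda$ and let $f^{-1}(x_{0})\cap\Lambda=\{y_{1},\dots,y_{k}\}$. Choose pairwise disjoint open sets $V_{i}\ni y_{i}$ with $V_{i}\subset W$ on which $f$ is a diffeomorphism onto the open set $f(V_{i})\ni x_{0}$. I would then argue by contradiction that some ball $B(x_{0},\rho)$ satisfies $f^{-1}(x)\cap\Lambda\subset\bigcup_{i}V_{i}$ for all $x\in\Lambda\cap B(x_{0},\rho)$: otherwise there are points $x_{n}\to x_{0}$ in $\Lambda$ and preimages $y_{n}'\in\bigl(f^{-1}(x_{n})\cap\Lambda\bigr)\setminus\bigcup_{i}V_{i}$, and by compactness of $\Lambda$ a subsequence of $(y_{n}')$ converges to some $y'\in\Lambda$ with $f(y')=x_{0}$ and $y'\notin\bigcup_{i}V_{i}$, contradicting $f^{-1}(x_{0})\cap\Lambda=\{y_{1},\dots,y_{k}\}\subset\bigcup_{i}V_{i}$. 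Shrinking $\rho$ so that $B(x_{0},\rho)\subset\bigcap_{i}f(V_{i})$ as well, each $V_{i}$ contains exactly one preimage of a given $x\in\Lambda\cap B(x_{0},\rho)$, and that preimage lies in $\Lambda$ by Step 2; hence $\nu(x)=k=\nu(x_{0})$ on $\Lambda\cap B(x_{0},\rho)$. Thus $\nu\colon\Lambda\to\mathbb N$ is locally constant, and since $\Lambda$ is connected it is constant.

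The step I expect to be the main obstacle is the compactness argument in Step 3: one must rule out that, as the base point varies slightly, a preimage escapes toward $\partial U$ or that two preimages collide. Both are excluded by combining the local-diffeomorphism property of $f$ near $\Lambda$ with the fact, proved in Step 1, that all the relevant preimages already lie in the \emph{compact} set $\Lambda$ — which is the reason Step 1 is carried out first. A smaller technical care is to work with the intermediate neighbourhood $W\subset U$, rather than shrinking $U$, so as not to disturb the defining conditions of the repellor on $U$.
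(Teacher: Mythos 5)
Your proof is correct and follows essentially the same route as the paper's: first the identity $f^{-1}\Lambda\cap U=\Lambda$ via the forward orbit together with a prehistory in $U$ built by iterating $\bar U\subset f(U)$, then local constancy of the preimage count using that $f$ is a local diffeomorphism near $\Lambda$ and that, by the first part, every preimage in $U$ of a point of $\Lambda$ already lies in the compact set $\Lambda$. Your Step 3 simply makes explicit the compactness argument that the paper leaves implicit in the phrase ``if $y$ is close enough to $x$''.
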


\begin{proof}

Let a point $x \in \Lambda$, and $y$ be an $f$-preimage of $x$ from $U$.
Then $f^n y \in \Lambda, n \ge 1$. From Definition \ref{repellor}, since $\Lambda$ is assumed to be a repellor,
the point $y$ has a preimage $y_{-1}$ in $U$; then $y_{-1}$ has a preimage $y_{-2}$ from $U$, and so on.
Thus $y$ has a full prehistory belonging to $U$ and also its forward orbit belongs to $U$, hence $y \in \Lambda$ since $\Lambda$ is a basic set.
So $f^{-1} \Lambda \cap U = \Lambda$.

We prove now the second part of the statement. Let a point $x \in \Lambda$ and assume that it has $d$ $f$-preimages in $\Lambda$, denoted $x_1, \ldots, x_d$. Consider also another point $y \in \Lambda$ close to $x$. If $y$ is close enough to $x$ and since $\mathcal{C}_f \cap \Lambda = \emptyset$, it follows that $y$  also has exactly $d$ $f$-preimages in $U$, denoted by $y_1, \ldots, y_d$.  Since from the first part we know that $f^{-1}\Lambda \cap U = \Lambda$, we obtain that $y_1, \ldots, y_d \in \Lambda$.
In conclusion the number of $f$-preimages in $\Lambda$ of a point is locally constant. If $\Lambda$ is assumed to be connected, then the number of preimages belonging to $\Lambda$ of any point from $\Lambda$, must be constant.
\end{proof}

Let us denote by $d(x)$ the number of $f$-preimages that the point $x$ has in the repellor $\Lambda$. Then from the above Proposition we know that $d(\cdot)$ is locally constant on $\Lambda$. Clearly there exist only finitely many values that $d(\cdot)$ may take on $\Lambda$. We will assume in the sequel that the number of preimages $d(\cdot)$ is \textbf{constant} on $\Lambda$.
This happens for instance when $\Lambda$ is connected (from Proposition \ref{constant}). We give the results in this setting (i.e when $\Lambda$ is connected), but in fact all we need is that $d(\cdot)$ is constant.

We will work with uniformly hyperbolic endomorphisms on $\Lambda$ (\cite{R}, \cite{Bot}, \cite{M}, etc.)
The stable tangent spaces $E^s_x, x \in \Lambda$ depend Holder continuously on $x$ (see \cite{KH}, \cite{M}, \cite{Mi1});
the unstable
 tangent spaces depend on whole prehistories, i.e we have $E^u_{\hat x}, \hat x \in \hat \Lambda$.
  Here $(\hat \Lambda, \hat f)$ is the \textbf{natural extension} (\cite{Ro}), or \textbf{inverse limit} of the dynamical system $(\Lambda, f)$; the space $\hat \Lambda:= \{\hat x = (x, x_{-1}, x_{-2}, \ldots), f(x_{-i}) = x_{-i+1}, i \ge 1, x _0:=x\}$ is the space of full prehistories of points from $\Lambda$ and the map $\hat f: \hat \Lambda \to \hat \Lambda, \hat f(\hat x) = (fx, x, x_{-1}, x_{-2}, \ldots), \hat x \in \hat \Lambda$ is the \textit{shift homeomorphism}.
We denote also by $\pi:\hat \Lambda \to \Lambda$ the \textit{canonical projection} given by $\pi(\hat x) = x, \hat x \in \hat \Lambda$. The compact topological space $\hat\Lambda$ can be endowed with a natural metric, but it is not a manifold.

We shall denote $Df|_{E^s_x}$ by $Df_s(x)$ and call it the \textit{stable
derivative} at $x\in \Lambda$; and $Df_u(\hat x):= Df|_{E^u_{\hat x}}$ is the
\textit{unstable derivative} at $\hat x \in \hat \Lambda$. Similarly the local stable and unstable manifolds are denoted by $W^s_r(x),
W^u_r(\hat x), \hat x \in \hat \Lambda$, for some small $r>0$. We call  \textbf{stable potential} the function
$$\Phi^s(x):= \log |Jac(Df_s(x))| = \log |\text{det}(Df_s(x))|, x \in \Lambda$$

One notices that there exists a bijection between the set $\mathcal{M}(f)$ of $f$-invariant probability
measures on $\Lambda$ and the set $\mathcal{M}(\hat f)$ of $\hat f$-invariant probability measures on the natural
extension $\hat \Lambda$, so that to any measure $\mu \in \mathcal{M}(f)$ we associate the unique measure
$\hat \mu \in \mathcal{M}(\hat f)$ satisfying the relation $\pi_*(\hat \mu) = \mu$ (for example Rokhlin, \cite{Ro}).
 It is easy to show that $h_{\hat \mu}(\hat f) = h_\mu(f)$ and that $P_{\hat f}(\phi\circ \pi) = P_f(\phi),
 \forall \phi \in \mathcal{C}(\Lambda, \mathbb R)$. Thus $\mu$ is an equilibrium measure for a potential $\phi$ if and only if its unique $\hat f$-invariant lifting $\hat \mu$ is an equilibrium measure for $\phi \circ \pi$ on $\hat \Lambda$.
Next let us transpose to the setting of endomorphisms, some properties of equilibrium measures from the diffeomorphism case, by using liftings to the natural extension.

\begin{pro}\label{eq-endo}

Let $\Lambda$ be a hyperbolic basic set for a smooth endomorphism $f:M \to M$, and let $\phi$ a Holder
continuous function on $\Lambda$. Then there exists a unique equilibrium measure
$\mu_\phi$ for $\phi$ on $\Lambda$ such that for any $\vp >0$, there exist positive constants $A_\vp, B_\vp$ so that for any $y \in \Lambda, n \ge 1$,
$$ A_\vp e^{S_n \phi(y) - nP(\phi)}  \le \mu_\phi(B_n(y,
\vp)) \le  B_\vp e^{S_n\phi(y)-n P(\phi)} $$
\end{pro}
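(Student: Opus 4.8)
The plan is to reduce the statement to the classical Bowen-type estimates for equilibrium states on hyperbolic basic sets of diffeomorphisms, by passing to the natural extension $(\hat\Lambda,\hat f)$. As observed in the paragraph preceding the proposition, there is a bijection $\mu\leftrightarrow\hat\mu$ between $\mathcal M(f)$ and $\mathcal M(\hat f)$ with $\pi_*\hat\mu=\mu$, $h_{\hat\mu}(\hat f)=h_\mu(f)$, and $P_{\hat f}(\phi\circ\pi)=P_f(\phi)$; hence $\mu$ is the (unique) equilibrium measure of $\phi$ on $\Lambda$ iff $\hat\mu$ is the (unique) equilibrium measure of $\phi\circ\pi$ on $\hat\Lambda$. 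Uniqueness of $\mu_\phi$ thus follows from uniqueness of the equilibrium state of the Hölder potential $\phi\circ\pi$ on the hyperbolic set $\hat\Lambda$ (equivalently, from expansiveness and specification of $\hat f$ on $\hat\Lambda$, which $f$ itself need not possess). So the real content is the Gibbs-type two-sided bound on $\mu_\phi$ of Bowen balls $B_n(y,\vp)$ taken in the manifold $M$.

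First I would record the Bowen/Gibbs property of $\hat\mu_\phi$ on $\hat\Lambda$: for every $\vp>0$ there are $\hat A_\vp,\hat B_\vp>0$ with
$$\hat A_\vp\, e^{S_n(\phi\circ\pi)(\hat y)-nP(\phi)}\le \hat\mu_\phi\big(\hat B_n(\hat y,\vp)\big)\le \hat B_\vp\, e^{S_n(\phi\circ\pi)(\hat y)-nP(\phi)},\qquad \hat y\in\hat\Lambda,\ n\ge1,$$
where $\hat B_n(\hat y,\vp)$ is the dynamical ball for $\hat f$ in the natural metric on $\hat\Lambda$. This is the standard estimate for equilibrium states of Hölder potentials over expansive systems with specification. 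Note $S_n(\phi\circ\pi)(\hat y)=S_n\phi(\pi\hat y)=S_n\phi(y)$ since $\phi$ depends only on the base point and the first $n$ coordinates of the $\hat f$-orbit of $\hat y$ are the forward $f$-orbit of $y$; this is why only the forward Birkhoff sum $S_n\phi(y)$ appears in the final bound.

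Next I would transfer this to $M$ via $\pi$. Fix $y\in\Lambda$ and choose any $\hat y\in\pi^{-1}(y)$. The key geometric fact — a consequence of uniform hyperbolicity and the fact that the local unstable manifolds $W^u_r(\hat x)$, though prehistory-dependent and possibly mutually intersecting, still vary Hölder-continuously in $\hat x$ — is a comparison of dynamical balls: there is $r_0>0$ and, for each small $\vp$, a $\delta=\delta(\vp)$ with $\pi\big(\hat B_n(\hat y,\delta)\big)\subset B_n(y,\vp)\cap\Lambda$ and, conversely, $B_n(y,\vp)\cap\Lambda\subset\pi\big(\bigcup_{\hat z\in\pi^{-1}(y)}\hat B_n(\hat z,\vp')\big)$ for a suitable $\vp'$; since $\pi^{-1}(y)$ need not be finite one instead covers $B_n(y,\vp)\cap\Lambda$ by boundedly many sets of the form $\pi(\hat B_n(\hat z,\vp'))$ using compactness of $\hat\Lambda$ and a finite sub-cover argument at scale $\vp'$. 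Because $\mu_\phi=\pi_*\hat\mu_\phi$, these inclusions convert the $\hat\mu_\phi$-bounds into $\mu_\phi$-bounds on $B_n(y,\vp)$, with constants $A_\vp,B_\vp$ adjusted by the (uniformly bounded) combinatorial factors and by the oscillation of $S_n\phi$ over a single dynamical ball — which is $O(1)$ in $n$ by Hölder continuity of $\phi$ and the standard summable-geometric-series estimate along hyperbolic orbits. Assembling these gives the claimed inequalities.

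The main obstacle is precisely this last geometric comparison between $\hat B_n(\hat y,\cdot)$ in $\hat\Lambda$ and $B_n(y,\cdot)$ in $M$: in the diffeomorphism case $\pi$ is a homeomorphism and the two notions of dynamical ball are bi-Lipschitz comparable, but here $\pi$ is finite-to-(possibly varying)-one and not injective, local unstable manifolds overlap, and there is no Markov partition to organize the combinatorics. One must therefore argue directly from uniform hyperbolicity — contraction on $E^s$ forward, contraction on $E^u_{\hat x}$ backward — that two full prehistories whose base forward-orbits $\vp$-shadow each other for $n$ steps must themselves $\vp'$-shadow each other in $\hat\Lambda$ for $n$ steps, up to a bounded ambiguity in choosing the prehistory; this is where the Hölder dependence of $E^u_{\hat x}$ and of $W^u_r(\hat x)$ on $\hat x\in\hat\Lambda$ is used, together with a uniform local product structure on $\hat\Lambda$. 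Once that shadowing-with-bounded-multiplicity statement is in hand, the rest is bookkeeping.
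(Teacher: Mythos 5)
Your reduction to the natural extension, the uniqueness argument, and the lower bound are in line with the paper: the paper likewise takes the unique equilibrium state $\hat\mu_\phi$ of $\phi\circ\pi$ for the expansive homeomorphism $\hat f$, sets $\mu_\phi=\pi_*\hat\mu_\phi$, and uses $\pi(B_n(\hat y,\vp))\subset B_n(y,\vp)$ (with $\pi$ Lipschitz) to get the lower estimate. The genuine gap is in your upper bound. Since $\mu_\phi=\pi_*\hat\mu_\phi$, what has to be estimated from above is $\hat\mu_\phi\bigl(\pi^{-1}B_n(y,\vp)\bigr)$, the measure of the set of \emph{all} prehistories of \emph{all} points of $B_n(y,\vp)\cap\Lambda$, whose backward tails are completely unconstrained. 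Covering $B_n(y,\vp)\cap\Lambda$ by finitely many sets $\pi(B_n(\hat z_i,\vp'))$ only gives $\pi^{-1}B_n(y,\vp)\subset\bigcup_i\pi^{-1}\pi\bigl(B_n(\hat z_i,\vp')\bigr)$, and $\hat\mu_\phi\bigl(\pi^{-1}\pi(B_n(\hat z_i,\vp'))\bigr)$ is \emph{not} controlled by the Gibbs estimate for $B_n(\hat z_i,\vp')$: the saturation $\pi^{-1}\pi(\cdot)$ re-introduces exactly the arbitrary backward tails you are trying to eliminate, so your bookkeeping does not close. Moreover the ``shadowing-with-bounded-multiplicity'' statement you defer to is false as stated: a point $z\in B_n(y,\vp)\cap\Lambda$ has on the order of $d^m$ prehistory branches at depth $m$ (no uniform bound), and since backward iterates expand along the stable direction, a prehistory of $z$ need not $\vp'$-shadow any prehistory of $y$ for $n$ steps; you offer no proof of this key claim, and the whole upper estimate rests on it.

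The paper bypasses all of this with one short observation that your argument is missing: by $\hat f$-invariance, $\hat\mu_\phi\bigl(\pi^{-1}B_n(y,\vp)\bigr)=\hat\mu_\phi\bigl(\hat f^k\pi^{-1}B_n(y,\vp)\bigr)$ for every $k$, and if $k=k(\vp)$ is chosen so large that coordinates at depth $>k$ contribute less than $\vp$ to the natural metric on $\hat\Lambda$ (they enter with weight of order $2^{-k}$), then $\hat f^k\bigl(\pi^{-1}B_n(y,\vp)\bigr)\subset B_{n-k}(\hat f^k\hat y,2\vp)$ for any choice of $\hat y\in\pi^{-1}(y)$: after shifting forward $k$ times the uncontrolled tails sit below depth $k$, while the first $k+i$ coordinates of $\hat f^{k+i}\hat z$ are forward iterates of $z$, which are $\vp$-close to those of $y$ because $z\in B_n(y,\vp)$. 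The Gibbs bound on $\hat\Lambda$ for the ball of length $n-k$ then gives the upper estimate, the loss of $k$ terms in the Birkhoff sum (at most $k\|\phi\|$, with $k$ depending only on $\vp$) being absorbed into $B_\vp$. So the missing idea is this single use of invariance under $\hat f^{k(\vp)}$, which replaces the covering/shadowing machinery you sketch and which, as written, would not yield the upper inequality.
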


\begin{proof}

The shift $\hat f :\hat \Lambda \to \hat \Lambda$ is an expansive homeomorphism. The existence of a unique
equilibrium measure for the Holder potential $\phi\circ \pi$ with respect to the homeomorphism
$\hat f:\hat \Lambda \to \hat \Lambda$ follows from the standard theory of expansive homeomorphisms
(for example \cite{KH}); let us denote it by $\hat \mu_\phi$.
According to the discussion above there exists a unique probability measure $\mu_\phi$ with $\mu_\phi:= \pi_* \hat \mu_\phi$, and $\mu_\phi$ is the unique equilibrium measure for $\phi$ on $\Lambda$. The uniqueness follows from the bijection between $\mathcal{M}(f)$ and $\mathcal{M}(\hat f)$ and from the
fact that $\hat\phi:= \phi\circ\pi:\hat \Lambda \to
\mathbb R$ is Holder continuous (as $\pi:\hat \Lambda \to \Lambda $ is Lipschitz and $\phi$ is Holder).
Now, there exists a $k=k(\vp) \ge 1$ such that
 $\hat f^k(\pi^{-1}B_n(y, \vp)) \subset B_{n-k}(\hat f^k\hat y, 2\vp) \subset \hat \Lambda$, for any $y \in \Lambda$.
 On the other hand for any $\hat y \in \hat \Lambda$, we have $\pi(B_n(\hat y, \vp)) \subset B_n(y, \vp)$.
 The last two set inclusions and the $\hat f$-invariance of $\hat \mu_\phi$, together with the estimates for the
 $\hat \mu_\phi$-measure of the Bowen balls in $\hat \Lambda$ (from \cite{KH}) imply that there exist positive
 constants $A_\vp, B_\vp$ (depending on $\vp>0$ and $\phi$)
 such that the estimates from the statement hold.
\end{proof}

Next let us show that the notion of connected repellor is \textbf{stable under perturbations}; this property is important when dealing with systems having a small level of random noise, as it happens in most physical situations.

\begin{pro}\label{stability}

Let $\Lambda$ be a connected repellor for a smooth map $f: M \to
M$ so that $f$ is hyperbolic on $\Lambda$, and let a perturbation
$g$ which is  $\mathcal{C}^1$-close to $f$. Then $g$ has a
connected repellor $\Lambda_g$ close to $\Lambda$ such that $g$ is
hyperbolic on $\Lambda_g$. In addition the number of $g$-preimages
belonging to $\Lambda_g$ of any point of $\Lambda_g$, is the same
as the number of $f$-preimages in $\Lambda$ of a point from
$\Lambda$.
\end{pro}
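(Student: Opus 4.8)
\medskip
\noindent\emph{Proof plan.} The plan is to check, one at a time, the four assertions packed into the statement: that $g$ still satisfies the ``expansion'' clause $\overline U\subset g(U)$ of Definition \ref{repellor}, that the maximal $g$-invariant set $\Lambda_g$ in $U$ is a hyperbolic basic set, that $\Lambda_g$ is connected, and that $g$ has the same number of preimages in $\Lambda_g$ as $f$ has in $\Lambda$. First I replace the neighbourhood of Definition \ref{repellor} by a smaller one, still called $U$, with $\overline U$ compact, $\overline U\cap C_f=\emptyset$, $\overline U\subset f(U)$, and on whose closure the hyperbolic cone estimates for $f$ hold uniformly; and I fix an even smaller open neighbourhood $U_1$ of $\Lambda$ with $\overline{U_1}\subset U$. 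By Proposition \ref{constant}, $f^{-1}(\Lambda)\cap U=\Lambda$, and since $\Lambda$ is a compact subset of $U_1$ this yields the relation $f^{-1}(\Lambda)\cap\overline{U_1}=\Lambda$, so in particular $f^{-1}(\Lambda)\cap\partial U_1=\emptyset$.

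Because $C_f\cap\overline U=\emptyset$, $f$ is a local diffeomorphism near $\overline U$, and then the condition $\overline U\subset f(U)$ is open in the $\mathcal C^0$ topology: covering the compact set $\overline U$ by finitely many images $f(B_i)$ of small balls $\overline{B_i}\subset U$ on which $f$ is a diffeomorphism, homotopy invariance of the topological degree shows that any $g$ sufficiently $\mathcal C^0$-close to $f$ still has $\overline U\subset g(U)$. For $g$ moreover $\mathcal C^1$-close to $f$ we retain $C_g\cap\overline U=\emptyset$ and, by openness of the cone conditions, the uniform hyperbolic estimates on $\overline U$.

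Now put $\Lambda_g:=\bigcap_{n\in\mathbb Z}g^nU$, the maximal $g$-invariant set in $U$; then $g(\Lambda_g)=\Lambda_g$, using $\overline U\subset g(U)$. That $\Lambda_g$ is nonempty, Hausdorff-close to $\Lambda$, hyperbolic, connected and topologically transitive follows from structural stability of the hyperbolic basic set $\Lambda$ in the endomorphism category. Since $f$ is non-invertible one argues on the natural extension, where the shift is a homeomorphism and unstable directions are carried along prehistories (cf.\ \cite{R}, \cite{M}): the shadowing lemma produces a homeomorphism $\hat h:\hat\Lambda\to\hat\Lambda_g$, $\mathcal C^0$-close to the canonical inclusion, with $\hat g\circ\hat h=\hat h\circ\hat f$. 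Projecting by $\pi_g$ shows $\Lambda_g=\pi_g(\hat\Lambda_g)$ is nonempty and close to $\Lambda$, and the cone estimates give its hyperbolicity. Connectedness: $\hat\Lambda$ is the inverse limit of the connected compactum $\Lambda$ under the surjection $f$, hence connected, so $\hat\Lambda_g=\hat h(\hat\Lambda)$ is connected and therefore $\Lambda_g=\pi_g(\hat\Lambda_g)$ is connected. Transitivity: the natural extension of the transitive system $(\Lambda,f)$ is transitive, so $\hat f|_{\hat\Lambda}$ is transitive, hence so is $\hat g|_{\hat\Lambda_g}$, hence so is its factor $g|_{\Lambda_g}$. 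Thus $\Lambda_g$ is a connected repellor for $g$ on which $g$ is hyperbolic. (Alternatively, once $\Lambda_g$ is known to be a connected, locally maximal hyperbolic set, the spectral decomposition forces it to be a single transitive piece.)

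It remains to count preimages. By Proposition \ref{constant} applied to $(g,\Lambda_g)$, the number $d_g$ of $g$-preimages in $\Lambda_g$ of any point of $\Lambda_g$ is constant, and $g^{-1}(\Lambda_g)\cap U=\Lambda_g$; since $\Lambda_g\subset U_1$ for $g$ close to $f$, this gives $g^{-1}(x)\cap U=g^{-1}(x)\cap U_1$ for $x\in\Lambda_g$, hence $d_g=\#\big(g^{-1}(x)\cap U_1\big)$ there. Consider $N_\phi(x):=\#\big(\phi^{-1}(x)\cap U_1\big)$ for $\phi$ $\mathcal C^1$-close to $f$ and $x$ in a fixed connected neighbourhood $O$ of $\Lambda$. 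A compactness argument using $f^{-1}(\Lambda)\cap\partial U_1=\emptyset$ shows that $\phi^{-1}(x)\cap\partial U_1=\emptyset$ throughout this range; and since $C_\phi\cap\overline{U_1}=\emptyset$, $\phi$ is a diffeomorphism near each preimage, so by the inverse function theorem, together with the fact that no preimage escapes through $\partial U_1$, the integer $N_\phi$ is locally constant in $(\phi,x)$. As the $\mathcal C^1$-ball around $f$ and the set $O$ are connected and $\Lambda\cup\Lambda_g\subset O$, it follows that $d_g=N_g(x)=N_f(x')=d$ for $x\in\Lambda_g$, $x'\in\Lambda$, which is exactly the last assertion. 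The step I expect to be the real obstacle is the structural-stability input on the natural extension --- making the shadowing and the conjugacy rigorous with no manifold structure on $\hat\Lambda$ and with prehistory-dependent unstable subspaces; the degree argument for $\overline U\subset g(U)$, the openness of hyperbolicity, and the locally-constant preimage count are routine once that is in place.
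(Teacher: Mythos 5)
Your proposal is correct and follows essentially the paper's route: the same $\Lambda_g=\bigcap_{n\in\mathbb Z}g^n(U)$, hyperbolicity by openness of the hyperbolicity (cone) conditions, a conjugating homeomorphism $\hat\Lambda\to\hat\Lambda_g$ on the natural extensions, and connectedness of $\Lambda_g$ deduced from connectedness of the inverse limit $\hat\Lambda$. Your preimage count --- applying Proposition \ref{constant} to $(g,\Lambda_g)$ and then showing $N_\phi(x)=\#\big(\phi^{-1}(x)\cap U_1\big)$ is locally constant in $(\phi,x)$ and using connectedness --- is just a more carefully bookkept version of the paper's argument that the local inverse branches of $g$ stay close to those of $f$, so each point of $\Lambda_g$ has exactly $d$ $g$-preimages in $U$, each of which has a full prehistory in $U$ and hence lies in $\Lambda_g$.
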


\begin{proof}

Since $\Lambda$ has a neighbourhood $U$ so that $\bar U \subset f(U)$, it follows that for $g$ close enough to $f$, we will obtain $\bar U \subset g(U)$.
If $g$ is $\mathcal{C}^1$-close to $f$, then we can take the set
$$
\Lambda_g:= \mathop{\cap}\limits_{n \in \mathbb Z} g^n(U)
$$
and it is quite standard that $g$ is hyperbolic on $\Lambda_g$ (for example \cite{R}, \cite{M}, etc.)
One can form then the natural extension of the system $(\Lambda_g, g)$.
We know that there exists a conjugating homeomorphism $H: \hat \Lambda \to \hat \Lambda_g$ which commutes with $\hat f$ and $\hat g$. The natural extension $\hat \Lambda$ is connected if $\Lambda$ is connected, from the fact that the topology on $\hat \Lambda$ is induced by the product topology from $\Lambda^{\mathbb N}$. Hence $\hat \Lambda_g$ is connected and thus $\Lambda_g$ itself is connected too.
Moreover we have that $\bar U \subset g(U)$ if $g$ is close enough to $f$, thus
$\Lambda_g$ is a connected repellor for $g$.

Now for the second part of the proof, assume that $x \in \Lambda$ has $d$ $f$-preimages in $\Lambda$. Then if $C_f \cap \Lambda = \emptyset$ and if $g$ is $\mathcal{C}^1$-close enough to $f$, it follows that the local inverse iterates of $g$ are close to the local inverse iterates of $f$ near $\Lambda$. Thus any point $y \in \Lambda_g$ has exactly $d$ $g$-preimages in $U$, denoted by $y_1, \ldots, y_d$. Any of these $g$-preimages from $U$ has also a $g$-preimage in $U$ since $\bar U \subset g(U)$, and so on. This implies that $y_i \in \Lambda_g = \mathop{\cap}\limits_{n \in \mathbb Z} g^n(U), i =1, \ldots, d$;  hence $y$ has exactly $d$ $g$-preimages belonging to $\Lambda_g$.
\end{proof}

We will need in the sequel an estimate of the volume of a \textit{tubular unstable neighbourhood} $f^n(B_n(y, \vp))$,
where $B_n(y, \vp):= \{z \in M, d(f^i z, f^i y) < \vp, i = 0, \ldots, n-1\}$ is a Bowen ball.  The set $f^n(B_n(y, \vp))$ is a neighbourhood in $M$ of the local unstable manifold $W^u_\vp(\hat f^n y)$, for $\hat f^n y = (f^n y, f^{n-1}y, \ldots, y, \ldots)$. Such sets were used in the definition of the inverse pressure, a notion developed in order to obtain estimates for the stable dimension in the non-invertible case (\cite{Mi1}).

By the measure $m(\cdot)$ on $M$ we understand the
\textbf{Lebesgue measure} defined on the manifold $M$. And by
$S_n\phi(y)$ we denote the consecutive sum $\phi(y) + \ldots +
\phi(f^{n-1}y)$ for  $y \in \Lambda, \phi \in \mathcal{C}(\Lambda,
\mathbb R)$.

\begin{lem}\label{tub}

Let $f:M \to M$ be a smooth endomorphism and $\Lambda$ be a basic set on which $f$ is hyperbolic. Then for some fixed small $\vp>0$ there exist positive constants $A, B >0$ such that for any $n \ge 1$ we have:
$$
A e^{S_n\Phi^s(y)} \le m(f^n B_n(y, \vp)) \le B e^{S_n\Phi^s(y)}
$$
\end{lem}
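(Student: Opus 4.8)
The plan is to estimate $m(f^n B_n(y,\varepsilon))$ by slicing the set $f^n B_n(y,\varepsilon)$ into an unstable direction and a stable direction and controlling each factor separately. Fix $\varepsilon>0$ small enough that on the $\varepsilon$-neighbourhood of $\Lambda$ the hyperbolic splitting, the local stable and unstable manifolds, and the local inverse branches of $f$ are all well behaved (no critical points, uniform cone fields, uniform contraction/expansion rates). First I would observe that $B_n(y,\varepsilon)$ is, up to uniformly bounded distortion, a ``box'' whose stable size is comparable to $\varepsilon$ (the stable directions are contracted under $f^i$, $0\le i\le n-1$, so staying $\varepsilon$-close for $n$ steps costs essentially nothing in the stable direction) and whose unstable size is of order $e^{-S_n\Phi^u}$ — but since we push forward by $f^n$, the image $f^n B_n(y,\varepsilon)$ has unstable size of order $\varepsilon$ and stable size of order $e^{S_n\Phi^s(y)}$ (the stable directions, run backwards from $f^ny$ to $y$, expand by exactly the reciprocal of the stable Jacobian). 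This is the heuristic; the rigorous version is the content of the lemma.

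The key steps, in order, would be: (1) Work on the natural extension or simply fix the prehistory $\widehat{f^n y}=(f^ny,f^{n-1}y,\dots,y,\dots)$ so that $W^u_\varepsilon(\widehat{f^ny})$ and the local stable manifolds through its points are defined; note $f^nB_n(y,\varepsilon)$ is a neighbourhood of $W^u_\varepsilon(\widehat{f^ny})$ as already stated in the excerpt. (2) Foliate $f^nB_n(y,\varepsilon)$ by pieces of local stable manifolds $W^s_{loc}(f^nw)$ for $w$ ranging over $B_n(y,\varepsilon)$; equivalently, parametrise the set as (unstable disk) $\times$ (stable disk) via a chart whose Jacobian is bounded above and below by constants depending only on $\varepsilon$ and the hyperbolicity constants, using Hölder continuity of $E^s$ and of the unstable manifolds in the prehistory (cited in the excerpt via \cite{M}, \cite{Mi1}). (3) By Fubini, $m(f^nB_n(y,\varepsilon))$ is comparable to $\int_{\text{unstable disk}} \mathrm{vol}_s\big(\text{stable slice over }\xi\big)\,d\xi$; the unstable disk has $m$-measure bounded above and below by constants (it is an $\varepsilon$-disk with controlled geometry), so the whole thing is comparable to the stable volume of a typical slice. (4) Estimate that stable volume: the stable slice over $f^ny$ is the image under $f^n$ of the stable ball of radius $\varepsilon$ at $y$, restricted to the Bowen condition; since $Df_s$ contracts with uniformly bounded distortion along the stable leaf, $\mathrm{vol}_s = e^{S_n\Phi^s(y)}$ up to a multiplicative constant $e^{\pm C\varepsilon}$-type error absorbed into $A,B$. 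Here one uses a bounded-distortion estimate along stable manifolds: $\big|\log\mathrm{Jac}(Df^n_s)(w) - \log\mathrm{Jac}(Df^n_s)(y)\big|\le C$ for $w\in W^s_\varepsilon(y)$, which follows by telescoping $\sum_{i=0}^{n-1}\big(\Phi^s(f^iw)-\Phi^s(f^iy)\big)$ and using the exponential contraction $d(f^iw,f^iy)\le C\lambda^i d(w,y)$ together with Hölder continuity of $\Phi^s$, giving a geometric series bounded independently of $n$. (5) Combine: $m(f^nB_n(y,\varepsilon)) \asymp 1 \cdot e^{S_n\Phi^s(y)}$, i.e. the claimed two-sided bound with constants $A,B$ depending only on $\varepsilon$ and $f|_\Lambda$.

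The main obstacle, and the place requiring genuine care rather than routine estimation, is step (2)–(3): making precise that $f^nB_n(y,\varepsilon)$ really does have a product-like structure with uniformly controlled geometry, despite the fact — emphasised throughout the introduction — that unstable manifolds depend on prehistories and may intersect each other. The subtlety is that $B_n(y,\varepsilon)$ as a subset of $M$ need not itself be a nice box; it is the preimage under $f^n$ of a neighbourhood of a single unstable disk, and different points of $B_n(y,\varepsilon)$ carry different prehistories, so one must argue that over the relevant scale $\varepsilon$ and for this fixed forward endpoint, the local stable foliation through $f^nB_n(y,\varepsilon)$ is an honest $C^1$ (or Lipschitz, or Hölder-with-summable-variation) foliation with Jacobian of the holonomy bounded above and below. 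I would handle this by pulling everything back by the inverse branch $f^{-n}$ determined by the prehistory of $f^ny$: under this branch $W^u_\varepsilon(\widehat{f^ny})$ goes to a tiny unstable curve near $y$, the stable directions blow up in a controlled way, and in these coordinates the set is manifestly a box of unstable size $\approx e^{-S_n\Phi^u(y)}$ and stable size $\approx\varepsilon$ with bounded distortion; then pushing forward by $f^n$ and computing the Jacobian — which splits (up to bounded distortion from the off-diagonal, Hölder terms) as $e^{S_n\Phi^u(y)}\cdot e^{S_n\Phi^s(y)}$ — leaves exactly $e^{S_n\Phi^s(y)}$ after the unstable factor cancels the unstable size. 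All distortion errors are uniform in $n$ by the telescoping/geometric-series argument of step (4), which is where hyperbolicity (as opposed to mere expansion) is essential and sufficient.
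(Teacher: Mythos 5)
Your argument is correct and is essentially the paper's own: the paper proves the bounded distortion estimate for $\Phi^s$ over Bowen balls (via H\"older continuity of the stable bundle, citing Proposition 1.6 of \cite{Mi1}) and then delegates the remaining volume computation to the Bowen-style tubular-neighbourhood argument ``as in \cite{Bo}'', which is exactly the stable-slice/Fubini computation you carry out in steps (2)--(5). You have simply written out in detail the part the paper cites, including the product-structure and holonomy control, so there is no substantive difference in approach.
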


\begin{proof}

First of all let us notice that $S_n\Phi^s(y) = \log |\text{det}(Df_s^n(y))|, y \in \Lambda, n \ge 1$.
From \cite{KH} we know that the stable spaces depend Holder continuously on their base point.
Thus $\Phi^s$ is a Holder function on $\Lambda$, as $\mathcal{C}_f \cap \Lambda = \emptyset$. Thus as in proposition 1.6 from \cite{Mi1}, we obtain a Bounded
Distortion Lemma, saying that there exist positive constants $\tilde A, \tilde B$ such that
$
\tilde A \le \frac{e^{S_n\Phi^s(z)}}{e^{S_n\Phi^s(y)}} \le \tilde
B, n \ge 1, z \in B_n(y, \vp).
$
Then using this Bounded Distortion Lemma, the conclusion follows similarly as in \cite{Bo}.
\end{proof}

\begin{thm}\label{pressure}

Consider $\Lambda$ to be a connected hyperbolic repellor for the
smooth endomorphism $f : M \to M$; let us assume that the constant number of $f$-preimages belonging to $\Lambda$
of any point from $\Lambda$ is equal to $d$.
Then $P(\Phi^s - \log d) = 0$.
\end{thm}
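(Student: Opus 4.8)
The plan is to prove the equivalent statement $P(\Phi^s)=\log d$, using the description of the topological pressure through separated sets: $P(\Phi^s)=\lim_{\vp\to 0}\limsup_{n}\frac 1n\log\big(\sup_{E_n}\sum_{y\in E_n}e^{S_n\Phi^s(y)}\big)$, the supremum being over all $(n,\vp)$-separated subsets $E_n$ of $\Lambda$, which I will always take maximal (hence also $(n,\vp)$-spanning). Since $S_n\Phi^s(y)=\log|\det Df^n_s(y)|$, Lemma \ref{tub} gives $A\,e^{S_n\Phi^s(y)}\le m\big(f^nB_n(y,\vp)\big)\le B\,e^{S_n\Phi^s(y)}$, so $\sum_{y\in E_n}e^{S_n\Phi^s(y)}$ is comparable, with constants independent of $n$, to $\sum_{y\in E_n}m\big(f^nB_n(y,\vp)\big)$; it therefore suffices to show that this last sum is comparable to $d^n$. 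Writing $\sum_{y\in E_n}m(f^nB_n(y,\vp))=\int_{V_0}\kappa_n(z)\,dm(z)$, where $V_0$ is a fixed neighbourhood of $\Lambda$ containing every $f^nB_n(y,\vp)$ (possible because $f^nB_n(y,\vp)$ lies within a fixed multiple of $\vp$ of $f^ny\in\Lambda$) and $\kappa_n(z):=\#\{y\in E_n:z\in f^nB_n(y,\vp)\}$, the problem becomes one of bounding the multiplicity $\kappa_n$ above and below by multiples of $d^n$.

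For the upper bound: if $z\in f^nB_n(y,\vp)$ then $z=f^nw$ for some $w$ with $d(f^iw,f^iy)<\vp$, $0\le i<n$; thus the orbit segment $w,fw,\dots,f^{n-1}w$ stays $\vp$-close to $\Lambda$ and $y\in B_n(w,\vp)$. Because $\mathcal C_f\cap\Lambda=\emptyset$ and, by Proposition \ref{constant}, $f^{-1}\Lambda\cap U=\Lambda$, on a fixed neighbourhood of $\Lambda$ there are exactly $d$ local inverse branches of $f$, and they capture all preimages lying near $\Lambda$; an induction on the length of the orbit then shows that $z$ has at most $d^n$ such preimages $w$. For each fixed such $w$, the set $E_n\cap B_n(w,\vp)$ has at most $N$ elements, with $N$ independent of $n$: any two of its points lie in $\Lambda$ and are $(n,2\vp)$-close, so their displacement has stable component $O(\vp)$ and unstable component $O(\vp\lambda_u^{-n})$, while $(n,\vp)$-separation forces the displacement of each pair to be of at least that order in some direction, and a standard packing estimate bounds the number of such points. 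Hence $\kappa_n(z)\le N\,d^n$, so $\sum_{y\in E_n}m(f^nB_n(y,\vp))\le N\,d^n\,m(V_0)$ and $P(\Phi^s)\le\log d$.

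For the lower bound I use the repellor hypothesis. If $z\in\Lambda$, then by Proposition \ref{constant} it has exactly $d^n$ preimages $v_1,\dots,v_{d^n}$ in $\Lambda$; these are pairwise not $(n,2\vp)$-close, because $f^n$ is a local diffeomorphism near $\Lambda$ and hence injective on $(n,r_0)$-balls for a fixed $r_0>2\vp$; since $E_n$ is $(n,\vp)$-spanning, each $v_k$ lies in some $B_n(y_k,\vp)$ with the $y_k\in E_n$ distinct, whence $\kappa_n(z)\ge d^n$. This persists on a set of positive measure: the local stable manifolds of points of $\Lambda$ are contained in $\Lambda$ — proved exactly as Proposition \ref{constant}, since a point of $W^s_\vp(x)\cap U$ has its forward orbit in $U$ and (using $\bar U\subset f(U)$) a full prehistory in $U$ — so if $z=z^*+\xi$ with $z^*\in\Lambda$ and $\xi$ in the unstable direction, $|\xi|<\vp/2$, then the $d^n$ preimages of $z$ near the genuine preimages of $z^*$ have their forward $n$-orbits within $\vp$ of $\Lambda$ (the unstable displacement only contracts under inverse branches), so again $\kappa_n(z)\ge d^n$. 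The set $Z$ of such $z$ is an unstable tube of thickness $\vp/2$ about $\Lambda$; since $\Lambda$ contains an entire local stable disk $W^s_\vp(x)$, $Z$ contains a set diffeomorphic to $W^s_\vp(x)\times(\text{unstable }(\vp/2)\text{-disk})$ and so $m(Z)\ge c(\vp)>0$ independently of $n$. Therefore $\sum_{y\in E_n}m(f^nB_n(y,\vp))\ge d^n\,m(Z)\ge c(\vp)\,d^n$, giving $P(\Phi^s)\ge\log d$; combined with the upper bound, $P(\Phi^s)=\log d$, i.e. $P(\Phi^s-\log d)=0$.

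The crux — and the reason this is not merely Bowen's expanding-map argument — is exactly what the introduction warns about: since $f$ is hyperbolic but not expanding, the local inverse iterates expand along the stable direction, so there is no distortion control for $f^{-n}$ on small balls, and the tubular neighbourhoods $f^nB_n(y,\vp)$ become arbitrarily thin transversally to the unstable direction. Both multiplicity bounds are therefore invisible to a naive volume comparison; they are obtained instead by working with the forward images $f^nB_n(y,\vp)$, for which Lemma \ref{tub} supplies the one distortion estimate one actually needs (bounded distortion along stable manifolds), together with the exact count of local inverse branches near $\Lambda$ from Proposition \ref{constant} and — for the lower bound — the fact that a repellor is saturated by its local stable manifolds, which is what makes a fixed-size unstable tube around $\Lambda$ have $n$-independent positive volume.
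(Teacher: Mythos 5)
Your overall strategy (compare $\sum_{y\in E_n}e^{S_n\Phi^s(y)}$ with the total volume of the tubes $f^nB_n(y,\vp)$ via Lemma \ref{tub}, then pin the covering multiplicity $\kappa_n$ between constants times $d^n$) is the same as the paper's, and your upper bound is sound -- it is essentially the paper's second inequality, with the bounded-overlap constant $N$ made explicit. The genuine gap is in your lower bound, at the sentence ``the unstable displacement only contracts under inverse branches.'' For a non-invertible repellor there is in general no single unstable direction at $z^*\in\Lambda$: the unstable space depends on the prehistory, and distinct prehistories of $z^*$ may have quite different (even transverse) unstable spaces -- exactly the phenomenon the paper emphasizes. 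If $\xi$ lies in the unstable space of one prehistory $\hat a$, then only the inverse branch along $\hat a$ contracts the displacement; along any other backward orbit $\hat b$ of $z^*$ the same $\xi$ has, in the splitting $E^s_{z^*}\oplus E^u_{\hat b}$, a nonzero stable component, which is \emph{expanded} at each inverse step, so after a bounded number of steps (independent of $n$) the corresponding preimages of $z$ leave the $\vp$-neighbourhood of the backward orbit of $z^*$, and nothing in your argument places them in any Bowen ball $B_n(y,\vp)$ with $y\in E_n\subset\Lambda$. Hence $\kappa_n(z)\ge d^n$ is not established on a fixed unstable tube $Z$ of positive measure, except in the Anosov-like case of a prehistory-independent splitting; for general repellors (e.g. the perturbed product examples in Section \ref{sec3}) your justification only controls the one branch whose unstable manifold nearly passes through $z$.

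This is precisely where the paper has to work harder: it uses the repellor condition $\bar U\subset f(U)$ to show, via the contradiction argument with $d(y_{-m},\Lambda)\le\lambda^m d(y,\Lambda)$, that \emph{every} point of a small neighbourhood $V$ of $\Lambda$ has all $d^n$ of its backward branches contained in $U$, and then uses hyperbolic shadowing (any point with a full prehistory in $U$ lies on some $W^u_\vp(\hat\xi)$, $\hat\xi\in\hat\Lambda$) to conclude that each such backward branch stays Bowen-$\vp$ close to a genuine backward orbit in $\Lambda$. That gives the multiplicity bound $\ge d^n$, for tubes of radius $3\vp$, on all of $V$, which is what your tube argument must be replaced by. (A smaller, fixable issue in the same half: even granting the tracking claim, a preimage of $z$ is only Bowen-$\vp/2$ close to a point of $\Lambda$ which is itself Bowen-$\vp$ close to a point of $E_n$, so your $\kappa_n$ must be defined with tubes of radius larger than $\vp$, as in the paper's choice of $3\vp$, with Lemma \ref{tub} applied at that radius.)
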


\begin{proof}

As we have seen in Proposition \ref{constant} if $\Lambda$ is a connected repellor, then the number of preimages belonging to $\Lambda$ of any point from $\Lambda$ is constant and equal to some integer $d>0$.

In fact if the neighbourhood $V$ of $\Lambda$ is close enough to $\Lambda$, then we can assume that any point $y \in V$ has exactly $d$ $f$-preimages belonging to $U$. We want to show that there exists a neighbourhood $V$ of $\Lambda$ such that any point from $V$ has exactly $d^n$ $n$-preimages belonging to $U$, for any $n \ge 1$.
First let us assume that the metric around $\Lambda$ is adapted to the hyperbolic structure on $\Lambda$, i.e there is $\lambda \in (0, 1)$ so that if $z \in W^u_r(\hat y)$ and $\hat z = (z, z_{-1}, \ldots)$ is the prehistory of $z$ $r$-shadowing the prehistory $\hat y$, then

\begin{equation}\label{unst}
d(y, z) \ge d(y_{-1}, z_{-1})\cdot \frac {1}{\lambda} \ge d(y_{-2}, z_{-2}) \cdot \frac {1}{\lambda^2}\ge \ldots
\end{equation}

Now consider a point $y \in V$ and some preimage $y_{-1} \in f^{-1}(y) \cap U$; If $y$ is close enough to $\Lambda$, then
$y_{-1} \in U$, and let us assume that we can continue this prehistory until we reach level $m$.
In other words $(y, y_{-1}, \ldots, y_{-m})$ is a finite prehistory of $y$ with $y_{-1}, \ldots, y_{-m} \in U$,
but there exists a preimage $\tilde y_{-m-1}$ of $y_{-m}$ which escapes $U$, so that $y_{-m}$ has less than $d$ preimages
in $U$.
From the definition of repellor we know that $\bar U \subset f(U)$, thus there exists some preimage $y_{-m-1} \in U \cap f^{-1}(y_{-m})$. Then this preimage $y_{-m-1}$ will have a full prehistory in $U$. Since $\Lambda$ is a basic set and $y_{-m}$ has a full prehistory in $U$, it follows that there exists a
prehistory $\hat \xi \in \hat \Lambda$ such that $y_{-m} \in W^u_r(\hat \xi)$, if $U$ is close enough to $\Lambda$ (\cite{KH}, \cite{R}).

Consequently $y \in W^u_r(\hat f^m \hat \xi)$; but from (\ref{unst}) we have $d(y_{-m}, \Lambda) \le d(y_{-m}, \xi)
\le \lambda^m d(y, f^m\xi) \le \lambda^m d(y, \Lambda)$. Recall however that a preimage of $y_{-m}$ escapes $U$, thus $d(y_{-m}, \Lambda)$ must be larger than some positive fixed constant $\chi_0$. Therefore if $V$ is close enough to $\Lambda$ (and hence $m$ is
large enough) we obtain a contradiction, since we know from above that $d(y_{-m}, \Lambda) \le \lambda^m \cdot d(y, \Lambda)$.

Hence there must exist a neighbourhood $V$ of $\Lambda$ such that any point from $V$ has exactly $d^n$ $n$-preimages belonging to $U$, for any $n \ge 1$.

Let us take now an $(n, \vp)$-separated set of maximal cardinality in $\Lambda$ and denote it by $F_n(\vp)$.
Hence $B_n(y, \vp/2) \cap B_n(z, \vp/2) = \emptyset, \forall y, z \in F_n(\vp)$. From the maximality condition it
follows also that $\Lambda \subset \mathop{\cap}\limits_{y \in F_n(\vp)} B_n(y, 2\vp)$.
Now from the fact that $C_f \cap \Lambda = \emptyset$, it follows that there exists a positive constant $\vp_0$ such that if $y, z \in f^{-1}x \cap U, y \ne z$, then $d(y, z) > \vp_0$.
This implies that if $y, z \in f^{-n}x \cap \Lambda, y \ne z$, then we cannot have $z \in B_n(y, 4\vp)$ for small enough $\vp$.

So for a point $y \in V$, we know that any two of its different $n$-preimages must belong to distinct balls of type $B_n(\zeta, 2\vp), \zeta \in F_n(\vp)$; and $y$ must have $d^n$ $n$-preimages in  $U$.
If $y_{-n}$ is an $n$-preimage in $U$ of $y$, then there exists $\hat \xi \in \hat \Lambda$ so that $y \in W^u_\vp(\hat \xi)$ and thus $y_{-n} \in B_n(\xi_{-n}, \vp)$ for some $\xi_{-n} \in \Lambda$. But since $F_n(\vp)$ is a maximal $(n, \vp)$-separated set in $\Lambda$, it follows that $\xi_{-n} \in B_n(z, 2\vp)$ for some $z \in F_n(\vp)$. Hence $y_{-n} \in B_n(z, 3\vp)$ and $y \in f^n(B_n(z, 3\vp))$ for some $z \in F_n(\vp)$. Thus we have the following geometric picture of the dynamics on the basin $V$ of the repellor: through every point $y \in V$ there pass $d^n$ tubular neighbourhoods of type $f^nB_n(z_i, 3\vp), z_i \in F_n(\vp), i = 1, \ldots, d^n$. Let us denote such an intersection by $V_n(z_1, \ldots, z_{d^n})$.

Therefore from Lemma \ref{tub} it follows that, if we add the volumes of all sets $f^n(B_n(z, 3\vp)), z \in F_n(\vp)$, we obtain that each piece $V_n(z_1, \ldots, z_{d^n})$ is repeated at least $d^n$ times, hence
$$
d^n m(V) \le \mathop{\sum}\limits_{z \in F_n(\vp)} e^{S_n\Phi^s(z)}
$$
Thus since this happens for any maximal $(n, \vp)$-separated set $F_n(\vp)$,
\begin{equation}\label{eq1}
m(V) \le P_n(\Phi^s- \log d, \vp),
\end{equation}

where $P_n(\psi, \vp)$ denotes in general the quantity $\inf\{\mathop{\sum}\limits_{z \in F} e^{S_n \psi(z)}, F
(n, \vp)-\text{separated in} \ \Lambda\}$, for $\psi$ a continuous real function on $\Lambda$.

Since $V$ is a neighbourhood of $\Lambda$ and thus $m(V) >0$, we obtain that
$$P(\Phi^s - \log d) \ge 0$$

We prove now the opposite inequality.
Indeed let us take some maximal $(n, \vp)$-separated set $F_n(\vp)$ in $\Lambda$ (with respect to $f$). Let a point $y \in V$, where
the neighbourhood $V$ of $\Lambda$ was constructed earlier in the proof.
Then similar to the above proof of the first inequality, each $n$-preimage $y_{-n}^i$ of $y$ must belong to some Bowen ball $B_n(z^i, 3\vp), z^i \in F_n(\vp), i = 1, \ldots, d^n$; hence $y$ belongs to the (open) intersection of $d^n$ tubular unstable neighbourhoods centered at points
$f^n(z_i), z_1, \ldots, z_{d^n} \in F_n(\vp)$, i.e $y \in \mathop{\cap}\limits_{1\le i \le d^n} f^n(B_n(z_i, 3\vp))$.
If $y$ would belong also to some additional tubular unstable neighbourhood $f^n(B_n(\omega, 3\vp))$ for some
$\omega \in F_n(\vp)$, besides the $d^n$ neighbourhoods $f^n(B_n(z_i, 3\vp)), i = 1, \ldots, d^n$, then it
 would follow that $y$ has an additional $n$-preimage $y_{-n}^{d^n+1} \in B_n(\omega, 3\vp)$. Thus since
$B_n(\omega, 3\vp) \subset U$ for small $\vp>0$ and for $\omega \in \Lambda$, we would get a contradiction since $y$ has at most $d^n$ $n$-preimages in $U$; here we used ahain that $\Lambda$ does not intersect the critical set of $f$.
So any $y \in V$ belongs to only $d^n$ tubular unstable neighbourhoods of type $f^n(B_n(z^i, 3\vp)), i = 1, \ldots, d^n$.

Now, as we see from Lemma \ref{tub}, the Lebesgue measure of a tubular unstable neighbourhood
$f^n(B_n(z, 3\vp)), z \in \Lambda$ is comparable to $e^{S_n \Phi^s(z)}$ (where by \textit{comparable}
we mean that the ratio of the two quantities is bounded below and above by positive constants which are independent of
$z, n$). Hence we showed that by taking $\mathop{\sum}\limits_{z \in F_n(\vp)} e^{S_n\Phi^s(z)}$ we cover in fact a combined volume which is less than $C d^n \cdot m(U)$ (for some positive constant $C$ independent of $n$).
From this observation it follows that
$$P(\Phi^s- \log d) \le \mathop{\inf}\limits_n \frac 1n m(U) = 0$$

Combining the two inequalities proved above, we obtain that $P(\Phi^s - \log d) = 0$.
\end{proof}

We are now ready to prove the main result of the paper, namely the existence of a physical measure for the local inverse iterates in the neighbourhood $V$ of the hyperbolic repellor $\Lambda$. We recall that the endomorphism $f$ is \textbf{not} assumed to be  expanding on $\Lambda$, instead it has both stable and unstable directions on $\Lambda$. As seen earlier, we can restrict without loss of  generality to connected repellors. Recall also that we assumed that the critical set of
  $f$ does not intersect $\Lambda$.

\begin{thm}\label{physical}
Let $\Lambda$ be a connected hyperbolic repellor for a smooth
endomorphism $f:M \to M$. There exists a neighbourhood $V$ of
$\Lambda$, $V \subset U$ such that if we denote by $$ \mu^z_n:=
\frac 1n \mathop{\sum}\limits_{y \in f^{-n}z \cap U}
\frac{1}{d(f(y))\cdot \ldots \cdot d(f^n(y))}
\mathop{\sum}\limits_{i=1}^n \delta_{f^i y}, z \in V $$

where $d(y)$ is the number of $f$-preimages belonging to $U$ of a point $y \in V$, then for any continuous function $g \in \mathcal{C}(U, \mathbb R)$ we have
$$
\int_V |\mu^z_n(g) - \mu_s(g)| dm(z) \mathop{\to}\limits_{n \to \infty} 0,
$$
where $\mu_s$ is the equilibrium measure of the stable potential $\Phi^s(x):= \log |\text{det}(Df_s(x))|, x \in \Lambda$.

\end{thm}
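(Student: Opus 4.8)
The plan is to show that the averaged measures $\mu^z_n$ converge, in the $L^1(m)$ sense described, to the unique equilibrium measure $\mu_s$ of $\Phi^s$, by combining the geometric tubular-neighbourhood picture developed in the proof of Theorem \ref{pressure} with the Gibbs-type estimates of Proposition \ref{eq-endo} and the bounded-distortion/volume estimate of Lemma \ref{tub}. First I would fix a small $\vp>0$ and, for each $n$, a maximal $(n,\vp)$-separated set $F_n(\vp)\subset\Lambda$; by the argument already given in Theorem \ref{pressure}, every $n$-preimage $y_{-n}$ of a point $y\in V$ lies in some Bowen ball $B_n(z,3\vp)$ with $z\in F_n(\vp)$, and conversely $y$ lies in exactly $d^n$ of the tubular neighbourhoods $f^n(B_n(z,3\vp))$. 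This gives the key combinatorial identity: integrating $\mu^z_n(g)$ against $dm(z)$ over $V$ amounts, up to the distortion constants of Lemma \ref{tub}, to forming a weighted sum $\sum_{z\in F_n(\vp)} \big(\text{vol-weight}\big)\cdot \big(\tfrac1n\sum_{i} g(f^i\cdot)\big)$, where the volume weight of the tube through $z$ is comparable to $e^{S_n\Phi^s(z)}$. Since $P(\Phi^s-\log d)=0$ by Theorem \ref{pressure}, the normalization $\tfrac{1}{d^n}\sum_{z\in F_n(\vp)} e^{S_n\Phi^s(z)}$ stays bounded between two positive constants, so these are (up to bounded multiplicative error) Gibbs-like weights for $\Phi^s$.

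Next I would compare this empirical weighted sum to the integral against $\mu_s$. The measure $\mu_s=\pi_*\hat\mu_{\Phi^s}$ satisfies, by Proposition \ref{eq-endo}, $A_\vp e^{S_n\Phi^s(z)-nP(\Phi^s)}\le \mu_s(B_n(z,\vp))\le B_\vp e^{S_n\Phi^s(z)-nP(\Phi^s)}$, so the masses $\mu_s(B_n(z,\vp))$, $z\in F_n(\vp)$, are comparable to the same weights $e^{S_n\Phi^s(z)}$ (note $P(\Phi^s)=\log d$). Hence $\int_V \mu^z_n(g)\,dm(z)$ and $\int_\Lambda \big(\tfrac1n\sum_{i=1}^n g\circ f^i\big)\,d\mu_s$ are both, up to errors that are $O(\vp)$ plus a multiplicative constant tending to $1$ as $\vp\to 0$, equal to $\sum_{z\in F_n(\vp)} \mu_s(B_n(z,\vp))\cdot \tfrac1n\sum_{i=1}^n g(f^i z)$ — the Birkhoff average of $g$ sampled at the centres and weighted by $\mu_s$-mass. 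The quantity $\int_\Lambda \tfrac1n\sum_{i=1}^n g\circ f^i\,d\mu_s$ equals $\mu_s(g)$ exactly, by $f$-invariance of $\mu_s$. So the whole proof reduces to controlling: (i) the replacement of $g(f^i y)$ by $g(f^i z)$ when $y\in B_n(z,3\vp)$, which costs the modulus of continuity of $g$ at scale $3\vp$ uniformly in $i\le n$; (ii) the multiplicative distortion constants, which one removes by letting $\vp\to0$ after $n\to\infty$; and (iii) an honest accounting of the $d^n$-fold overcounting of each intersection piece $V_n(z_1,\dots,z_{d^n})$, exactly as in Theorem \ref{pressure}, so that the $\tfrac1{d(f(y))\cdots d(f^n(y))}=\tfrac1{d^n}$ normalization in $\mu^z_n$ cancels it.

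I would organize the estimate as: $\big|\int_V \mu^z_n(g)\,dm(z) - m(V)\,\mu_s(g)\big| \le (\text{osc}_{3\vp} g)\cdot\text{const} + (\text{distortion error})\cdot\|g\|_\infty$, then divide by the appropriate normalization and let first $n\to\infty$ then $\vp\to0$; since the left side of the theorem is $\int_V|\mu^z_n(g)-\mu_s(g)|\,dm$ rather than $|\int_V(\mu^z_n(g)-\mu_s(g))\,dm|$, I also need a uniform-in-$z$ control, which the tubular picture supplies because the same $F_n(\vp)$ and the same volume bounds of Lemma \ref{tub} apply through \emph{every} $z\in V$ — so the pointwise discrepancy $|\mu^z_n(g)-\mu_s(g)|$ is itself bounded by $\text{osc}_{3\vp}g + o_n(1)$ uniformly in $z$, and then I integrate over $V$. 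The main obstacle I anticipate is precisely step (iii) together with the uniformity in $z$: one must verify that for \emph{Lebesgue-a.e.} $z\in V$ the number of $n$-preimages in $U$ is exactly $d^n$ and that no preimage is "lost" to the boundary of $U$ as $n$ grows — this is where the adapted-metric contraction estimate \eqref{unst} and the construction of the smaller neighbourhood $V$ from Theorem \ref{pressure} are essential, and getting the error terms to be uniform (not just a.e.) requires care with points $z$ whose preimages come $\vp$-close to $\partial U$. Absorbing those exceptional $z$ into a set of small $m$-measure, which vanishes as $V$ shrinks, is the delicate technical point.
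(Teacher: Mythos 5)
There is a genuine gap, and it sits exactly at the point you flag as ``delicate'' but then wave through: the claim that the pointwise discrepancy $|\mu^z_n(g)-\mu_s(g)|$ is bounded by $\mathrm{osc}_{3\vp}g+o_n(1)$ \emph{uniformly in} $z$. After you replace each $n$-preimage $y\in f^{-n}z\cap U$ by the center $\zeta\in F_n(\vp)$ of the Bowen ball containing it (which indeed only costs an oscillation term), what remains is the \emph{equal-weight} average $\frac{1}{d^n}\sum_j\frac1n\sum_i g(f^i\zeta_j)$ over the $d^n$ centers attached to $z$. Nothing forces the individual Birkhoff averages $\frac1n\sum_i g(f^i\zeta_j)$ to be close to $\mu_s(g)$: the $f$-invariance of $\mu_s$ only controls their $\mu_s$-weighted mean $\int\frac1n\sum_i g\circ f^i\,d\mu_s=\mu_s(g)$, not this particular combination, and certainly not with the absolute value inside the integral $\int_V|\mu^z_n(g)-\mu_s(g)|\,dm(z)$ that the theorem requires. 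Your proposal never invokes any ergodic theorem, and that is the missing ingredient: the paper bounds $|\mu^z_n(g)-\mu_s(g)|$ by $I_n(g,z)=\frac{1}{d^n}\sum_{y\in f^{-n}z\cap U}|\Sigma_n(g,y)|$, converts $\int_V I_n\,dm$ via Lemma \ref{tub}, Theorem \ref{pressure} and Proposition \ref{eq-endo} into a $\mu_s$-Gibbs-weighted sum $\sum_{\zeta\in F_n(\vp)}|\Sigma_n(g,\zeta)|\,\mu_s(B_n(\zeta,\vp/2))$, and then kills the contribution of the ``bad'' centers (those with $|\Sigma_n(g,\zeta)|\ge\eta$) because their Bowen balls have small total $\mu_s$-mass. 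That smallness is precisely what the $L^1$ Birkhoff Ergodic Theorem applied to $\hat f^{-1}$ on the natural extension gives, transferred to $n$-th preimages through the auxiliary relation (\ref{mu-hat}) and the shadowing estimate (\ref{umbra}); without it your scheme cannot rule out that a non-negligible (Lebesgue) set of $z$ has a large fraction of its backward branches shadowing orbits whose Birkhoff averages stay away from $\mu_s(g)$.

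Two secondary problems: (i) your plan to remove the multiplicative errors by ``letting $\vp\to0$ after $n\to\infty$'' does not work as stated, since the comparison constants ($\tilde A,\tilde B$ from the distortion lemma and $A_\vp,B_\vp$ from Proposition \ref{eq-endo}) are not ratios tending to $1$ as $\vp\to0$; the paper's argument is structured so that these constants only multiply quantities ($\eta$, $\chi$, the $\mu_s$-mass of the bad set) that are made arbitrarily small for fixed $\vp$, which is why it never needs them to tend to $1$. (ii) The worry about preimages being ``lost to $\partial U$'' for exceptional $z$ is a non-issue: the construction of $V$ in the proof of Theorem \ref{pressure} (using the adapted metric estimate (\ref{unst})) guarantees that \emph{every} $z\in V$ has exactly $d^n$ $n$-preimages in $U$, so no exceptional set of small measure needs to be excised. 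The correct fix to your outline is therefore not more care at the boundary, but inserting the in-measure ergodic-theorem input (\ref{mu-n}) and carrying the absolute value through the good/bad splitting of $F_n(\vp)$ as the paper does.
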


\begin{proof}

We assume that $U$ is the neighbourhood of $\Lambda$ from Definition \ref{repellor}, i. e such that $\bar U \subset f(U)$.
As we proved in Proposition \ref{constant}, if $\Lambda$ is a connected hyperbolic repellor, then any point from $\Lambda$ has exactly $d$ $f$-preimages belonging to $\Lambda$ for some positive integer $d$.
Moreover as was shown in the beginning of the proof of Theorem \ref{pressure}, there exists a neighbourhood $V$ of $\Lambda$ such that any point from $V$ has $d^n$ $n$-preimages in $U$, for $n\ge 1$.

If $\Lambda$ is a hyperbolic repellor we have that all local stable manifolds must be contained in $\Lambda$. Indeed, otherwise there may exist small local stable manifolds which are not entirely contained in $\Lambda$. Let $W^s_r(x), x \in \Lambda$ one such stable manifold, with a point $y \in W^s_r(x) \setminus \Lambda$; in this case since $y \in U$ (for small $r$) and since $\bar U \subset f(U)$, it follows that $y$ has a full prehistory $\hat y$ in $U$, and from the fact that $\Lambda$ is a basic set, we obtain that $y \in W^u_r(\hat \xi)$ for some $\hat \xi \in \hat \Lambda$. But then $y = W^s_r(x) \cap W^u_r(\hat \xi)$, hence $y \in \Lambda$ from the local product structure of $\Lambda$ (since $\Lambda$ is a basic set, see for example \cite{KH}); this gives a contradiction to our assumption. Hence there exists a small $r>0$ such that all stable manifolds of size $r$ are contained in $\Lambda$.

We shall denote by $\mathcal{C}(U)$ the space of real continuous functions on $U$.
Let us fix now a Holder continuous function $g \in \mathcal{C}(U)$.
We will apply the $L^1$ Birkhoff Ergodic Theorem (\cite{Ma}) on $\hat \Lambda$ for the homeomorphism $\hat f^{-1}$, in order to obtain an estimate for the measure of the set of prehistories which are badly behaved.
Similarly as in \cite{KH} or \cite{M} we know that the stable distribution is Holder continuous, hence the stable potential on $\hat \Lambda$ is Holder too. This means that there exists a unique equilibrium measure for this potential on $\hat \Lambda$; so from the bijection between $\mathcal{M}(f)$ and $\mathcal{M}(\hat f)$ it follows that there exists a unique equilibrium measure for $\Phi^s$ on $\Lambda$ denoted by $\mu_s$. This measure is ergodic and we can apply the $L^1$ Birkhoff Ergodic Theorem to the function $g\circ \pi$ on $\hat\Lambda$:

\begin{equation}\label{Von}
||\frac 1 n (g(x) + g\circ \pi(\hat f^{-1}(\hat x)) + \ldots g\circ \pi (\hat f^{-n+1}(\hat x)) - \int_\Lambda g\circ \pi d\hat \mu_s||_{L^1(\hat \Lambda, \hat \mu_s)} \mathop{\to}\limits_{n \to \infty} 0
\end{equation}

We make now the general observation that if $f: \Lambda \to \Lambda$ is a continuous map on a compact metric space $\Lambda$, $\mu$ is an $f$-invariant borelian probability measure on $\Lambda$ and $\hat \mu$ is the unique $\hat f$-invariant probability measure on $\hat \Lambda$ with $\pi_*(\hat \mu) = \mu$, then for an arbitrary closed set $\hat F \subset \hat \Lambda$, we have that

\begin{equation}\label{mu-hat}
\hat \mu(\hat F) = \mathop{\lim}\limits_n \mu(\{x_{-n}, \exists \hat x = (x, \ldots, x_{-n}, \ldots) \in \hat F \})
\end{equation}

Let us prove (\ref{mu-hat}): first denote $\hat F_n:= \hat
f^{-n}\hat F, n \ge 1$; next notice that $\hat \mu(\hat F_n) =
\hat \mu(\hat F)$ since $\hat \mu$ is $\hat f$-invariant. Let also
$\hat G_n:= \pi^{-1}(\pi(\hat F_n)), n \ge 1$. We have $\hat F
\subset \hat f^n(\hat G_n), n \ge 0$. Let now a prehistory $\hat z
\in \mathop{\cap}\limits_{n \ge 0} \hat f^n\hat G_n$; then if
$\hat z = (z, z_{-1}, \ldots, z_{-n}, \ldots)$, we obtain that

$z_{-n} \in \pi\hat F_n, \forall n \ge 0$, hence $\hat z \in \hat
F$ since $\hat F$ is assumed closed. Thus we obtain $\hat F =
\mathop{\cap}\limits_{n\ge 0}\hat f^n(\hat G_n)$. Now the above
intersection is decreasing, since $\hat f^{n+1}\hat G_{n+1}
\subset \hat f^n\hat G_n, n \ge 0$. Since the above intersection
is decreasing, we get that $\hat\mu(\hat F) =
\mathop{\lim}\limits_n\hat \mu(\hat f^n\hat G_n) =
\mathop{\lim}\limits_n\hat \mu(\hat G_n) = \mathop{\lim}\limits_n
\hat \mu(\pi^{-1}(\pi(\hat F_n))) = \mathop{\lim}\limits_n
\mu(\pi(\hat F_n)) = \mathop{\lim}\limits_n \mu(\pi\circ \hat
f^{-n} \hat F)$, since $\hat \mu$ is $\hat f$-invariant. Therefore
we obtain (\ref{mu-hat}).

For a positive integer $n$, a continuous real function $g$ defined on the neighbourhood $U$ of $\Lambda$, and a point $y$ so that $y, f(y), \ldots, f^{n-1}(y)$ are all in $U$, let us denote by
$$
\Sigma_n(g, y):= \frac {g(y) + \ldots + g(f^{n-1}y)}{n} - \int g d\mu_s, n \ge 1, y \in \Lambda
$$
Now, from the convergence in $L^1(\hat \Lambda, \hat \mu_s)$ norm it follows the convergence in $\hat \mu_s$-measure. Thus from (\ref{Von}) and (\ref{mu-hat}) we obtain that for any small $\eta > 0$ and $\chi>0$, there exists $N(\eta, \chi)$ so that
 \begin{equation}\label{mu-n}
\mu_s(x_{-n} \in \Lambda, |\Sigma_n(g, x_{-n})| \ge \eta) < \chi, \text{for} \ n > N(\eta, \chi)
 \end{equation}

Let us consider some small $\vp > 0$. Recall that for $n \ge 1$ and $y \in \Lambda$, the Bowen ball
$B_n(y, \vp):= \{z  \in M, d(f^i y, f^i z) < \vp, i = 0, \ldots, n-1\}$.

 We shall prove that if $y \in \Lambda$ and $z \in B_n(y, \vp)$ for $n$
large enough, then the behaviour of $\Sigma_n(g, z)$ is similar to that of $\Sigma_n(g, y)$.
More precisely, assume that $\eta>0$ and that $y \in \Lambda$ satisfies $|\Sigma_n(g, y)| \ge \eta$.
Then we will show that there exists $N(\eta) \ge 1$ so that

\begin{equation}\label{umbra}
|\Sigma_n(g, z)| \ge \frac{\eta}{2}, \forall z \in B_n(y, \vp), n > N(\eta)
\end{equation}

Since $g$ was assumed Holder, let us assume that it has a Holder exponent equal to $\alpha$, i.e
$$|g(x)- g(y)| \le C\cdot d(x, y)^\alpha, \forall x, y \in U,$$ where $d(x, y)$ is the Riemannian distance (from $M$) between $x$ and $y$ and $C>0$ is a constant.
The idea now is that, if $z \in B_n(y, \vp)$, then for some time the iterates of $z$ follow the iterates of $y$ close to stable manifolds, and afterwards they follow the iterates of $y$ closer and closer to unstable manifolds. We have in both cases an exponential growth of distances between iterates, and thus we can use the Holder continuity of $g$ on $U$.

If $z \in B_n(y, \vp), y \in \Lambda$ then we either have $z \in W^s_\vp(y) \subset \Lambda$ or there exists a positive distance
between $z$ and the local stable manifold $W^s_\vp(y)$. In the first case there exists some $\lambda_s \in (0, 1)$ such
that $d(f^i z, f^i y) < \lambda_s^i \vp, i = 0, \ldots, n-1$.
This implies that, in the case when $z \in W^s_\vp(y)$, for some $N_0 \ge 1$ we have:

\begin{equation}\label{Ws}
|g(f^{N_0}y) +\ldots + g(f^{n-1}y)- g(f^{N_0}z)- \ldots- g(f^{n-1}z)| \le \lambda_s^{\alpha N_0} \cdot C_0,
\end{equation}
for some constant $C_0 > 0$ independent of $n$.
 If $z \in B_n(y, \vp)$ but $z$ is not necessarily on $W^s_\vp(y)$, then the iterates of $z$ will approach exponentially
  some local unstable manifolds at the corresponding iterates of $y$ and their "projections" on these unstable manifolds
  increases exponentially, up to a maximum value less than $\vp$ (reached at level $n$).
  More precisely there exists some $N_0, N_1 \ge 1$ and some $\lambda \in (\lambda_s, 1)$ such that $d(f^i z, f^i y)
   \le \lambda^i, i = N_0, \ldots, N_1-1$; notice that $N_0, N_1, \lambda$ are independent of $y, z, n$.
   Now if the iterate $f^{N_1} z$ becomes much closer to $W^u_\vp(f^{N_1} y)$ than to $W^s_\vp(f^{N_1} y)$, it follows
   that all the higher order iterates will approach asymptotically the local unstable manifolds and $d(f^j y, f^j z)$
   increases exponentially. We assume that $N_1$ has been taken such that for some $\lambda_u \in
   (\frac{1}{\inf_\Lambda |Df_u|}, 1)$, we have $d(f^j z, f^j y) \le \lambda_u \cdot d(f^{j+1} z, f^{j+1} y), j = N_1,
   \ldots, n-2$. So the maximum such distance is $d(f^{n-1}y, f^{n-1} z)$ and we know that $d(f^{n-1}y, f^{n-1}z) <
   \vp$ since $z \in B_n(y, \vp)$. Hence $$d(f^j z, f^j y) \le \vp \lambda_u^{n-j-1}, j = N_1, \ldots, n-1$$

Let us take now some $N_2\ge 1$ such that $n - N_2 > N_1$; $N_2$ will be determined later.
Thus from the Holder continuity of $g$ on $U$ we obtain (for some
positive constant $C$) that:

\begin{equation}\label{holder}
\aligned &|g(f^{N_0}z) + \ldots + g(f^{N_1-1}z) + g(f^{N_1}z) +
\ldots + g(f^{n-N_2}z) + \ldots +g(f^{n-1}z)– \\ & -g(f^{N_0}y) -
\ldots – g(f^{N_1-1}y) - g(f^{N_1}y) - \ldots - g(f^{n-N_2}y) -
\ldots  - g(f^{n-1} y)| \le \\ & \le C (\lambda^{\alpha N_0} +
\lambda_u^{\alpha N_2}) + 2 N_2 ||g||
\endaligned
\end{equation}

Thus from (\ref{Ws}) and (\ref{holder}) we obtain that, if $z \in
B_n(y, \vp)$ then:

\begin{equation}\label{Sigma-ev}
|\Sigma_n(g, y) - \Sigma_n(g, z)| \le \frac{1}{n} \left[2N_0 ||g|| +
C(\lambda^{\alpha N_0} + \lambda_u^{\alpha N_2}) + 2N_2 ||g||\right]
\end{equation}

From above, $N_0, N_2$ do not depend on $n, y, z$.
Therefore we can choose some large $N(\eta)$ so that $$\frac 1n
(2N_0 ||g|| + C(\lambda^{\alpha N_0} + \lambda_u^{\alpha N_2}) + 2N_2
||g||)< \eta/2,  \text{for} \ n > N(\eta)$$
This means that the relation from (\ref{umbra}) holds.
Let us denote now by:

\begin{equation}\label{In}
I_n(g, x): = \frac{1}{d^n} \mathop{\sum}\limits_{y \in f^{-n}(x)\cap U} |\Sigma_n(g, y)|,
\end{equation}
for a continuous real function $g:U \to \mathbb R$, and $x \in V$. Recall that $V$ is
the neighbourhood of $\Lambda$, $\Lambda \subset V \subset U$, constructed in the proof of
Theorem \ref{physical} so that every point $x \in V$ has $d^n$ $n$-preimages in $U$ for $n \ge 1$.
For a fixed Holder continuous function $g$ and a small $\eta >0$, we will work with $n > N(\eta)$,
where $N(\eta)$ was found above. From (\ref{Sigma-ev}) and the discussion afterwards, we know that
$|\Sigma_n(g, z) - \Sigma_n(g, y)| \le \eta/2$ if $z \in B_n(y, \vp)$ and $y \in \Lambda$.

Let us consider now an $(n, \vp)$-separated set with maximal cardinality in $\Lambda$, denoted by $F_n(\vp)$.
As in the proof of Theorem \ref{pressure} it follows that any point $y \in V$ belongs to $d^n$ tubular neighbourhoods, i.e $f^n(B_n(y_i, 3\vp)), y_i \in F_n(\vp)$ for $1 \le i \le d^n$. Let us denote as before $V_n(y_1, \ldots, y_{d^n}):= \mathop{\cap}\limits_{1\le i \le d^n} f^n B_n(y_i, 3\vp)$.

Thus in $\int_V I_n(g, x) dm(x)$, we can decompose $V$ into the smaller pieces $V_n(y_1, \ldots, y_{d^n})$, for different choices of $y_1, \ldots, y_{d^n} \in F_n(\vp)$.

We can use now relation (\ref{Sigma-ev}) in order to replace in $\int_V I_n(g, x) dm(x)$, the term $|\Sigma_n(g, y)|$ with $|\Sigma_n(g, \zeta)|$, where $x \in V$ is arbitrary, $y \in f^{-n}x \cap U$ and $y \in B_n(\zeta, 3\vp)$ for some $\zeta \in F_n(\vp)$.
Indeed let us fix some arbitrary small $\eta>0$. Then we prove similarly as in (\ref{Sigma-ev})

that if $n > N(\eta)$, then $|\Sigma_n(g, y)| \le |\Sigma_n(g, \zeta)| + \eta/2$, if $y \in B_n(\zeta, 3\vp)$
and $\zeta \in F_n(\vp)$ ($N(\eta)$ can be assumed to be the same as in (\ref{Sigma-ev}) without loss of generality).

So up to a small error of $\eta/2$ we can replace each of the
terms $|\Sigma_n(g, y)|$ with the corresponding $|\Sigma_n(g,
\zeta)|$. This implies that in the integral $\int_V I_n(g,
x)dm(x)$, on each piece of type $V_n(y_1, \ldots, y_{d^n})$ in
$f^n(B_n(y_j, 3\vp))$ for $y_j \in F_n(\vp)$, we integrate in fact
$|\Sigma_n(g, y_j)|$, modulo an error of $\eta/2$. Then we will
obtain that $$ \aligned &\int_V I_n(g, x)dm(x) \le \frac {1}{d^n}
\mathop{\sum}\limits_{z_1, \ldots, z_{d^n} \in
F_n(\vp)}\int_{V_n(z_1, \ldots, z_{d^n})}
\mathop{\sum}\limits_{i=1}^n |\Sigma_n(g, z_i)| dm
+\frac{\eta}{2}\cdot m(V) \\ &\le \frac{1}{d^n} \mathop{\sum}
\limits_{z \in F_n(\vp)} |\Sigma_n(g, z)| \cdot
\mathop{\sum}\limits_{z \in \{z_1, \ldots, z_{d^n}\}} m(V_n(z_1,
\ldots, z_{d^n})) + m(V) \eta/2 \\ &\le \frac{1}{d^n}
\mathop{\sum}\limits_{z \in F_n(\vp)} |\Sigma_n(g, z)| \cdot m(f^n
B_n(z, 3\vp)) + m(V) \eta/2
\endaligned
$$
So what we did is, we replaced $|\Sigma_n(g, y)|$ with $|\Sigma_n(g, z)|$ for all $y \in f^{-n}x \cap U$, where
$y \in B_n(z, 3\vp), z\in F_n(\vp)$, then we integrated the respective sums of $|\Sigma_n(g, z)|, z \in F_n(\vp)$ on small
pieces of tubular overlap $V_n(z_1, \ldots, z_{d^n})$; lastly, we kept $|\Sigma_n(g, z)|$ fixed for an arbitrary $z \in F_n(\vp)$ and added the measures of all intersections of $f^n B_n(z, 3\vp)$ with other tubular sets of type $f^n B_n(w, 3\vp), w \in F_n(\vp)$.
Thus by adding the measures of these overlaps, we recover $m(f^n B_n(z, 3\vp))$.
In conclusion we obtain:

\begin{equation}\label{int}
\int_V I_n(g, x) dm(x) \le C \cdot \mathop{\sum}\limits_{y \in F_n(\vp)} |\Sigma_n(g, y)| \cdot \frac{m(f^n(B_n(y, 3\vp))}{d^n} + \frac{\eta}{2} \cdot m(V)
\end{equation}

We recall now from Lemma \ref{tub}, that $m(f^n(B_n(y, 3\vp)))$ is comparable to
$e^{S_n\Phi^s(y)}$, independently of $n, y \in \Lambda$. And from Theorem \ref{pressure} we know that $P(\Phi^s) = \log d$.
Hence from Proposition \ref{eq-endo} we have that, if $\mu_s$ denotes the unique equilibrium measure of $\Phi^s$,
then $\mu_s(B_n(y, \vp/2))$ is comparable to $\frac{e^{S_n \Phi^s(y)}}{d^n}$, independently of $n, y$.
Therefore combining with (\ref{int}) we obtain that there exists a constant $C_1 >0$ s.t:

\begin{equation}\label{suma}
\int_V I_n(g, x) dm(x) \le C_1 \left(\mathop{\sum}\limits_{y \in F_n(\vp)} |\Sigma_n(g, y)| \mu_s(B_n(y, \vp/2)) + \eta \right),
\end{equation}
for $n > N(\eta)$. We will split now the points of $F_n(\vp)$ in
two disjoint subsets denoted by $G_1(n, \vp)$ and $G_2(n, \vp)$,
defined as follows: $$ G_1(n, \vp):= \{ y \in F_n(\vp),
|\Sigma_n(g, y)| < \eta\} \ \text{and} \ G_2(n, \vp):= \{z \in
F_n(\vp), |\Sigma_n(g, z)| \ge \eta\} $$

Recall that the Bowen balls $B_n(y, \vp/2), y \in F_n(\vp)$ are
mutually disjointed since $F_n(\vp)$ is $(n, \vp)$-separated. Also if $y \in G_2(n, \vp)$ and $z \in B_n(y, \vp/2)$,
we have $|\Sigma_n(g, z)| \ge \eta/2$ (from (\ref{umbra}));
hence $B_n(y, \vp/2)\cap \Lambda \subset \{z \in \Lambda, |\Sigma_n(g, z)| \ge \eta/2\}$.
Consequently for a constant $C_\vp>0$,
$$
\aligned
&\mathop{\sum}\limits_{y \in F_n(\vp)} |\Sigma_n(g, y)| \mu_s(B_n(y, \vp/2)) =
\mathop{\sum}\limits_{y \in G_1(n, \vp)} |\Sigma_n(g, y)| \mu_s(B_n(y, \vp/2)) +
\mathop{\sum}\limits_{y \in G_2(n, \vp)} |\Sigma_n(g, y)| \mu_s(B_n(y, \vp/2)) \le \\
& \le \eta \mathop{\sum}\limits_{y \in G_1(n, \vp)} \mu_s(B_n(y, \vp/2)) + 2||g||
\mu_s(z \in \Lambda, |\Sigma_n(g, z)| \ge \frac{\eta}{2})\cdot C_\vp
\endaligned
$$

But since the balls $B_n(y, \vp/2), y \in F_n(\vp)$ are mutually disjoint, we have $\mathop{\sum}\limits_{y \in G_1(n, \vp)} \mu_s(B_n(y, \vp/2)) \le 1$. Also $\mu_s(z \in \Lambda,
|\Sigma_n(g, z)| \ge \eta/2) < \chi$ for $n > N(\eta/2, \chi)$, as follows from (\ref{mu-n}).
Thus by using (\ref{suma}) we obtain for $n > \sup\{N(\eta), N(\eta, \chi)\}$
$$
\int_V I_n(g, x) dm(x) \le C_1 (\eta + \eta + C_\vp \cdot 2||g|| \chi) = 2C_1 (\eta + \chi \cdot C_\vp ||g||)
$$
Since $\eta, \chi>0$ were taken arbitrarily, and by recalling the formula for $I_n(g, x)$ from (\ref{In}) and the definition of $\mu^z_n$, we obtain that:
$$
\int_V |\mu^z_n(g) - \mu_s(g)| dm(z) \mathop{\to}\limits_{n \to \infty} 0
$$
Since Holder continuous functions $g$ are dense in the uniform norm on $\mathcal{C}(U)$, we obtain the conclusion of the Theorem for all $g \in \mathcal{C}(U)$.

\end{proof}

\begin{cor}\label{subseq}

In the same setting as in Theorem \ref{physical}, it follows that there exists a borelian set $A \subset V$
with $m(V \setminus A) = 0$ and a subsequence $(n_k)_k$ such that $\mu_{n_k}^z \mathop{\to} \limits_{k \to \infty}
\mu_s$ (as measures on $U$), for any point $z \in A$.
\end{cor}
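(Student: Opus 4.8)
The plan is to extract the desired almost-everywhere convergence along a subsequence from the $L^1$ convergence established in Theorem \ref{physical}. The key observation is that Theorem \ref{physical} gives, for each Holder $g$, that $\int_V |\mu^z_n(g) - \mu_s(g)|\,dm(z) \to 0$, and this extends to all $g \in \mathcal{C}(U)$ by density. Since $U$ is a compact subset of a manifold, $\mathcal{C}(U)$ is separable in the uniform norm; fix a countable dense set $\{g_j\}_{j \ge 1} \subset \mathcal{C}(U)$ (one may even take a countable dense set of Holder functions). For each fixed $j$, the sequence of nonnegative functions $z \mapsto |\mu^z_n(g_j) - \mu_s(g_j)|$ converges to $0$ in $L^1(V, m)$ as $n \to \infty$.

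First I would apply the standard fact that $L^1$ convergence implies almost-everywhere convergence along a subsequence, but carried out simultaneously for all $j$ by a diagonal argument. Concretely: by $L^1(V,m)$ convergence for $g_1$, choose a subsequence $(n^{(1)}_k)_k$ along which $|\mu^z_{n^{(1)}_k}(g_1) - \mu_s(g_1)| \to 0$ for $m$-a.e. $z \in V$; from $(n^{(1)}_k)_k$ extract a further subsequence $(n^{(2)}_k)_k$ handling $g_2$ likewise; continue inductively, and then take the diagonal subsequence $n_k := n^{(k)}_k$. Along $(n_k)_k$, for every $j$ there is a null set $N_j \subset V$ off which $\mu^z_{n_k}(g_j) \to \mu_s(g_j)$. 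Set $A := V \setminus \bigcup_j N_j$; then $m(V \setminus A) = 0$ since it is a countable union of null sets, and for every $z \in A$ we have $\mu^z_{n_k}(g_j) \to \mu_s(g_j)$ for all $j$.

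Finally I would promote convergence on the dense set $\{g_j\}$ to weak-$*$ convergence $\mu^z_{n_k} \to \mu_s$. Fix $z \in A$ and an arbitrary $g \in \mathcal{C}(U)$. Note the $\mu^z_n$ are probability measures, hence have total mass $1$ (this uses that $z$ has exactly $d^n$ $n$-preimages in $U$ and that the normalizing weights $1/(d(f y)\cdots d(f^n y))$ sum to $1$, as in the construction in Theorem \ref{physical}), so $\|\mu^z_n\| = 1$ for all $n$. Given $\delta > 0$, pick $g_j$ with $\|g - g_j\|_\infty < \delta$; then
$$
|\mu^z_{n_k}(g) - \mu_s(g)| \le |\mu^z_{n_k}(g - g_j)| + |\mu^z_{n_k}(g_j) - \mu_s(g_j)| + |\mu_s(g_j - g)| \le 2\delta + |\mu^z_{n_k}(g_j) - \mu_s(g_j)|,
$$
and the middle term tends to $0$ as $k \to \infty$, so $\limsup_k |\mu^z_{n_k}(g) - \mu_s(g)| \le 2\delta$; since $\delta$ was arbitrary, $\mu^z_{n_k}(g) \to \mu_s(g)$. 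As this holds for every $g \in \mathcal{C}(U)$ and every $z \in A$, we conclude $\mu^z_{n_k} \to \mu_s$ weakly for all $z \in A$, which is the claim. There is no serious obstacle here; the only point requiring a little care is the diagonal extraction together with the separability of $\mathcal{C}(U)$, both of which are routine.
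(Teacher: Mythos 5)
Your proof is correct and follows essentially the same route as the paper: almost-everywhere convergence along a subsequence for each member of a countable dense family in $\mathcal{C}(U)$, a diagonal extraction, and a density argument to upgrade to weak convergence for all continuous $g$. Your explicit remark that $\|\mu^z_n\|=1$ (needed for the approximation step) is a point the paper leaves implicit, but the argument is the same.
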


\begin{proof}

Let us fix $g \in \mathcal{C}(U)$. From the
convergence in Lebesgue measure of the sequence of functions $z
\to \mu^z_n(g), n \ge 1, z \in V$ obtained from Theorem \ref{physical}, it
follows that there exists a borelian set $A(g)$ with
$m(V \setminus A(g)) = 0$ and a subsequence $(n_p)_p$ so that
$\mu^z_{n_p}(g) \mathop{\to}\limits_p  \mu_s(g), z \in A(g)$.
Let us consider now a sequence of functions $(g_m)_{m \ge 1}$ dense in
$\mathcal{C}(U)$. By applying a diagonal sequence
procedure we obtain a subsequence $(n_k)_k$ so that
$\mu^z_{n_k}(g_m) \mathop{\to}\limits_k  \mu_s(g_m), \forall z \in
\mathop{\cap}\limits_m A(g_m), \forall m \ge 1$. We have also
$m (V \setminus \mathop{\cap}\limits_m A(g_m)) = 0$, since
$m (V \setminus A(g_m)) = 0, m \ge 1$.
However any real continuous function $g \in \mathcal{C}(U)$ can be approximated in the
uniform norm by functions $g_m$, hence it follows that
$\mu^z_{n_k}(g) \mathop{\to}\limits_k \mu_s(g), \forall z \in
A:= \mathop{\cap}\limits_m A(g_m)$. But we showed above that $m(V \setminus A) = 0$.

So we obtain that $\mu^z_{n_k} \mathop{\to}\limits_k  \mu_s$ for all points $z \in A$, where
$A$ has full Lebesgue measure in $V$.

\end{proof}

\section{Applications. Examples.}\label{sec3}

In this section we will pursue further ergodic properties of the inverse physical measure constructed
in Theorem \ref{physical}
and give also examples. Let us first remind the notion of the Jacobian of an endomorphism, relative to an invariant probability measure, from
Parry's book (\cite{P}).
Let $f: (X, \mathcal{B}, \mu) \to (X, \mathcal{B}, \mu)$ a measure preserving endomorphism on a Lebesgue probability space.
Assume that the fibers of $f$ are countable, i.e $f^{-1}x$ is countable for $\mu$-almost all $x \in X$. It can be proved (\cite{P}) that in this case $f$ is positively non-singular, i.e $\mu(A) = 0$ implies $\mu(f(A)) = 0$ for an arbitrary measurable set $A \subset X$. Also there exists a measurable partition $\alpha = (A_0, A_1, \ldots)$ of $X$ such that $f|_{A_i}$ is injective. Then using the
absolute continuity of $\mu\circ f$ with respect to $\mu$, we define the \textbf{Jacobian} $J_{f, \mu}$ on each set $A_i$, to be equal to the Radon-Nikodym derivative $\frac{d\mu\circ f}{d \mu}$. So:
$$
J_{f, \mu} (x) = \frac{d\mu\circ f}{d\mu}(x), x \in A_i, i \ge 0
$$
This is a well defined measurable function, which is larger or equal than 1 everywhere
(due to the $f$-invariance of $\mu$). Also it is easy to see that $J_{f, \mu}(\cdot)$ is
independent of the partition $\alpha$ and that it satisfies a Chain Rule, namely $J_{f\circ g, \mu} = J_{f, \mu}
\cdot J_{g, \mu}$ if $f, g:X \to X$ and both preserve $\mu$.
From Lemma 10.5 of \cite{P} we also know that $$\log J_{f, \mu}(x) = I(\epsilon/f^{-1}\epsilon)(x),$$
for $\mu$-almost every $x \in X$, where $\epsilon$ is the partition of $X$ into single points, and
$I(\epsilon/f^{-1}\epsilon)(\cdot)$ is the conditional information function of $\epsilon$ given the partition $f^{-1}\epsilon$. Also from the definition of the Jacobian we see (\cite{PDL}) that:

\begin{equation}\label{jac}
\mu(fA) = \int_A J_{f, \mu}(x) d\mu(x),
\end{equation}

for all \textit{special sets} $A$, i.e measurable sets such that $f|_A: A \to f(A)$ is injective.
Recall that by Definition \ref{repellor}, $f$ does not have any critical points in $\Lambda$.
Before proving the main result of this Section, we remind the notion of measurable partitions subordinated to local stable manifolds; for background on measurable partitions, Lebesgue spaces and conditional measures, one can use \cite{Ro}.

Let $f:M \to M$ be a smooth endomorphism defined on a Riemannian manifold $M$ which is endowed with its Borelian $\sigma$-algebra $\mathcal{B}$. Let also a probability borelian measure $\mu$ on $M$ which is $f$-invariant.
 If $\xi$ is a measurable partition of $M$, then we denote by $\xi(x)$ the unique subset of $\xi$ containing $x \in X$; also by $(M/\xi, \mu_\xi)$ we denote the factor space relative to $\xi$. To any measurable partition $\xi$ on $(M, \mathcal{B}, \mu)$ one can attach an essentially unique collection of \textbf{conditional measures} $\{\mu_C\}_{C \in \xi}$ satisfying two conditions (see \cite{Ro}):

 \ \ i) $(C, \mu_C)$ is a Lebesgue space

 \ \ ii) for any measurable set $B \subset M$, the set $B\cap C$ is measurable in $C$ for almost all points $C \in M/\xi$
 of the factor space, and the function $C \to \mu_C(B \cap C)$ is measurable on $M/\xi$ and $\mu(B) =
 \int_{M/\xi} \mu_C(B \cap C) d\mu_\xi$.

Similar to the case of partitions subordinated to unstable
manifolds (\cite{Y}) we can say (as in \cite{PDL}), that a
measurable partition $\xi$ of $(M, \mathcal{B}, \mu)$ is
\textbf{subordinate to local stable manifolds} if for $\mu$-almost
all $x \in M$ one has $\xi(x) \subset W^s_r(x)$ and if $\xi(x)$
contains an open neighbourhood of $x$ inside $W^s_r(x)$ (where
$r>0$ is sufficiently small). We can define now the absolute
continuity of conditional measures on stable manifolds as in
\cite{PDL}:

\begin{defn}\label{accm}

In the above setting, we say that $\mu$ has \textbf{absolutely
continuous conditional measures on local stable manifolds} if for
every measurable partition $\xi$ subordinated to local stable
manifolds, we have for $\mu$ almost all $x \in M$ that $\mu^\xi_x
\ll m^s_x$, where $\mu^\xi_x$ is the conditional measure of $\mu$
on $\xi(x)$ and $m^s_x$ denotes the induced Lebesgue measure on
$W^s_r(x)$.

\end{defn}

By the result of Liu (\cite{PDL}), we know that there exists at least one measurable partition
subordinated to local stable manifolds.

Now, by Oseledec Theorem (\cite{Ma}) we have that for any $f$-invariant Borel probability measure $\mu$ on $M$, and
for $\mu$-almost every point $x \in M$ there exists a finite collection of numbers, called \textit{Lyapunov exponents} of
$f$ at $x$ with respect to $\mu$,
$
-\infty \le \lambda_1(x) < \lambda_2(x) < \ldots < \lambda_{q(x)}(x) < \infty,
$ and a unique collection of tangent subspaces of $T_xM$, $V_1(x) \subset \ldots \subset V_{q(x)}(x) = T_x M$ so that
$$
\mathop{\lim}\limits_n \frac 1n \log |Df^n_x (v)| = \lambda_i(x), \forall v \in V_i(x)
\setminus V_{i-1}(x), 1 \le i \le q(x), |v| = 1
$$
We also denote by $m_i(x):= \text{dim}V_i(x) - \text{dim} V_{i-1}(x)$ the \textit{multiplicity} of $\lambda_i(x)$. As we saw before, if $\Lambda$ is a connected repellor for $f$ then $f|_\Lambda$ is constant-to-1.
We are now ready to prove the following:

\begin{thm}\label{cond}

Let $\Lambda$ be a connected hyperbolic repellor for a smooth endomorphism $f:M \to M$ on a Riemannian manifold $M$; assume that $f$ is $d$-to-1 on $\Lambda$.
Then there exists a unique $f$-invariant probability measure $\mu^-$ on $\Lambda$ satisfying an inverse Pesin entropy formula:
$$
h_{\mu^-}(f) = \log d - \int_\Lambda \mathop{\sum}\limits_{i, \lambda_i(x) < 0} \lambda_i(x) m_i(x) d\mu^-(x)
$$
In addition the measure $\mu^-$ has absolutely continuous conditional measures on local stable manifolds.
\end{thm}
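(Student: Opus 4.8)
The plan is to take $\mu^-:=\mu_s$, the unique equilibrium measure of the stable potential $\Phi^s$ supplied by Proposition~\ref{eq-endo}, and to verify the inverse Pesin formula, the uniqueness, and the absolute continuity of the conditional measures separately. For the first two, note that for any $f$-invariant probability $\nu$ on $\Lambda$ one has $S_n\Phi^s(x)=\log|\det(Df^n_s(x))|$, so by the Oseledec theorem $\tfrac1n S_n\Phi^s(x)\to\sum_{i:\lambda_i(x)<0}\lambda_i(x)m_i(x)$ for $\nu$-a.e.\ $x$, while the same averages converge in $L^1(\nu)$ (Birkhoff) to a function of integral $\int\Phi^s\,d\nu$; hence $\int_\Lambda\Phi^s\,d\nu=\int_\Lambda\sum_{i:\lambda_i(x)<0}\lambda_i(x)m_i(x)\,d\nu(x)$. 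Applying this with $\nu=\mu_s$ and combining the variational identity $h_{\mu_s}(f)+\int\Phi^s\,d\mu_s=P(\Phi^s)$ with $P(\Phi^s)=\log d$ from Theorem~\ref{pressure} gives exactly the displayed formula for $\mu^-$. Conversely, by the same identification an $f$-invariant $\nu$ satisfies the displayed formula if and only if $h_\nu(f)+\int\Phi^s\,d\nu=\log d=P(\Phi^s)$, i.e.\ if and only if $\nu$ is an equilibrium measure for the H\"older potential $\Phi^s$ (the stable bundle being H\"older and $C_f\cap\Lambda=\emptyset$); by Proposition~\ref{eq-endo} this forces $\nu=\mu_s$, which is the uniqueness assertion.

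For the absolute continuity the key point is the identity $H_{\mu^-}(\epsilon/f^{-1}\epsilon)=\log d$ for the folding entropy, and this is where the convergence of the measures $\mu^z_n$ enters. Since $d(\cdot)\equiv d$, we may write $\mu^z_n=\tfrac1n\sum_{i=1}^n(f^i)_*\nu^z_n$ with $\nu^z_n:=d^{-n}\sum_{y\in f^{-n}z\cap U}\delta_y$. Fix a point $z$ in the full-measure set $A\subset V$ of Corollary~\ref{subseq}, together with a small neighbourhood $R$ which is a $\mu_s$-continuity set and is evenly covered, $f^{-1}(R)=R_1\sqcup\cdots\sqcup R_d$ with each $f|_{R_j}\colon R_j\to R$ a homeomorphism (possible since $C_f\cap\Lambda=\emptyset$ and $f$ is $d$-to-$1$ near $\Lambda$), and let $S\subset R$ be a $\mu_s$-continuity set. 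Using that $y\mapsto f^iy$ is $d^i$-to-$1$ from $f^{-n}z\cap U$ onto $f^{-(n-i)}z\cap U$ and that $f|_{R_j}$ restricts to a \emph{bijection} from $f^{-(n-i)}z\cap R_j\cap f^{-1}S$ onto $f^{-(n-i-1)}z\cap S$, one obtains $\#(f^{-(n-i)}z\cap R_j\cap f^{-1}S)=\#(f^{-(n-i-1)}z\cap S)$, and after reindexing the two sums this yields $\mu^z_n(R_j\cap f^{-1}S)=\tfrac1d\,\mu^z_n(S)+O(1/n)$, the $O(1/n)$ error containing only the two extreme indices $i\approx1$ and $i\approx n$.

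Passing to the limit along the subsequence of Corollary~\ref{subseq} gives $\mu_s(R_j\cap f^{-1}S)=\tfrac1d\,\mu_s(S)$ for every $j$ and every $\mu_s$-continuity set $S\subset R$. Disintegrating $\mu_s$ over the partition $f^{-1}\epsilon$ (whose factor measure is $f_*\mu_s=\mu_s$), this says that for $\mu_s$-a.e.\ fibre $f^{-1}(w)$ the conditional measure assigns mass $1/d$ to each of the $d$ points of $f^{-1}(w)$; equivalently $J_{f,\mu_s}\equiv d$ $\mu_s$-a.e., so by Lemma~10.5 of \cite{P} one gets $H_{\mu^-}(\epsilon/f^{-1}\epsilon)=\int_\Lambda\log J_{f,\mu_s}\,d\mu_s=\log d$. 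Combining this with the inverse Pesin formula already proved, $h_{\mu^-}(f)+\int_\Lambda\sum_{i:\lambda_i(x)<0}\lambda_i(x)m_i(x)\,d\mu^-(x)=\log d=H_{\mu^-}(\epsilon/f^{-1}\epsilon)$, which is exactly the equality case in Liu's inequality $h_\mu(f)+\int\sum_{i:\lambda_i<0}\lambda_i m_i\,d\mu\le H_\mu(\epsilon/f^{-1}\epsilon)$ for $C^2$ endomorphisms (\cite{PDL}); by Liu's characterization of this equality case, $\mu^-$ has absolutely continuous conditional measures on local stable manifolds.

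The variational-principle manipulations and the appeal to Proposition~\ref{eq-endo} are routine; the delicate part is the folding-entropy computation. There one must choose the evenly covered boxes $R$ and the sets $S$ to be $\mu_s$-continuity sets so that the weak convergence of Corollary~\ref{subseq} can be applied to the (non-open) sets $R_j\cap f^{-1}S$, bound the two boundary terms in the preimage count uniformly in $n$, and then match the geometric equality $\mu_s(R_j\cap f^{-1}S)=\tfrac1d\mu_s(S)$ with Parry's Jacobian/conditional-measure formalism. A further point requiring care is verifying that Liu's inequality and its rigidity statement are available in the present non-expanding, non-invertible $C^2$ setting with $C_f\cap\Lambda=\emptyset$.
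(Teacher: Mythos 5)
Your proposal is correct and follows essentially the same route as the paper: set $\mu^-:=\mu_s$, derive the formula and its uniqueness from $\int_\Lambda\Phi^s d\nu=\int_\Lambda\sum_{i,\lambda_i<0}\lambda_i m_i\,d\nu$, $P(\Phi^s-\log d)=0$ and uniqueness of the equilibrium state of the H\"older potential $\Phi^s$, then use the convergence of $\mu^z_{n_k}$ from Corollary \ref{subseq} to get $J_{f,\mu^-}\equiv d$, hence $H_{\mu^-}(\epsilon/f^{-1}\epsilon)=\log d$, and conclude absolute continuity on local stable manifolds from Liu's equality characterization in \cite{PDL} via Parry's Lemma 10.5. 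Your evenly-covered-box counting argument giving $\mu_s(R_j\cap f^{-1}S)=\frac{1}{d}\mu_s(S)$ is simply a more detailed rendering of the paper's step that $\mu^-(f(B))=d\,\mu^-(B)$ for special sets $B$ with $\mu^-$-null boundary.
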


\begin{proof}

Notice that from the above properties of Lyapunov exponents, the derivative
$Df^n_{s,x}$ for large $n$, takes into consideration all the vectors $v \in V_i(x)$ for those
$i$ for which $\lambda_i(x) <0$, i.e for which we have contraction in the long run. Thus if $\mu$ is an $f$-invariant probability measure supported on $\Lambda$, we have by the Chain Rule and Birkhoff
Theorem that

\begin{equation}\label{Lyapunov}
\aligned
\int_\Lambda \Phi^s d\mu & =  \int_\Lambda \mathop{\lim}\limits_n \frac 1n \mathop{\sum}\limits_{i = 0}^{n-1} \Phi^s(f^i x)
d\mu(x) = \\
& = \int_\Lambda \mathop{\lim}\limits_n \frac 1n \log |\text{det}(Df^n_{s, x})| d\mu(x) = \int_\Lambda
\mathop{\sum}\limits_{i, \lambda_i(x)<0} \lambda_i(x) m_i(x)d\mu(x)
\endaligned
\end{equation}

It follows that the inverse Pesin entropy formula from the statement of the Theorem is satisfied for $\mu= \mu_s$ since $\mu_s$ is the equilibrium measure of $\Phi^s$ and we showed in Theorem \ref{pressure} that $P(\Phi^s - \log d) = 0$. If the inverse Pesin entropy formula would be satisfied for another invariant measure $\mu$, then  we would have $h_{\mu}(f) = \log d - \int_\Lambda \mathop{\sum}\limits_{i, \lambda_i(x) < 0}
\lambda_i(x) m_i(x) d\mu(x)$, hence:
$$P(\Phi^s - \log d) \ge h_\mu - \log d + \int_\Lambda \Phi^s d\mu = 0$$

However again from Theorem \ref{pressure} we know that
$P(\Phi^s - \log d) = 0,$ thus
$\mu$ is an equilibrium measure for $\Phi^s$. But $\Phi^s$ is Holder continuous  and thus it has a unique equilibrium measure.
Therefore if $\mu^- := \mu_s$, we have $$\mu = \mu_s = \mu^-$$

We want now to show the absolute continuity of conditional measures of $\mu^-$ on local stable manifolds. For this we will use Corollary  \ref{subseq} and  results from \cite{PDL}.
Indeed we know that $\Lambda$ is a connected hyperbolic repelor and thus $f$ is $d$-to-1 for some integer
$d \ge 1$ in a neighbourhood $V$ of $\Lambda$. We constructed the measures $\mu^z_n, z \in V, n \ge 1, \mu^z_n :=
\frac {1}{d^n} \mathop{\sum}\limits_{y \in f^{-n}z} \frac{1}{n} \mathop{\sum}\limits_{i=1}^n \delta_{f^iy}$;
and we showed in Corollary \ref{subseq} that there exists a subset $A \subset V$, having full Lebesgue measure and a
subsequence $(\mu^z_{n_k})_k$ converging weakly towards $\mu^- := \mu_s$ for every $z \in A$. Now in (\ref{jac}) we can take only
special sets whose boundaries have $\mu^-$-measure equal to zero. For such a set $B$ we have that $\mu_{n_k}(B)
\mathop{\to}\limits_{k \to \infty} \mu^-(B)$. But then from the definition of $\mu^z_n$ it follows that
$\mu^-(f(B)) = d \mu^-(B)$ for any such special set with boundary of measure zero.
As these sets form a sufficient collection (\cite{KH}), we obtain that the Jacobian $J_{f, \mu^-}$ is constant
$\mu^-$-almost everywhere and equal to $d$.
Hence from Lemma 10.5 of \cite{P}, if $\epsilon$ denotes the partition of $M$ into single points, we deduce that
the conditional information function
$I(\epsilon/f^{-1}\epsilon)(x) = \log J_{f, \mu^-}(x) = \log d$ for $\mu^-$-almost all $x \in \Lambda$; thus
$$
H_{\mu^-}(\epsilon/f^{-1}\epsilon) = \int I(\epsilon/f^{-1}\epsilon)(x) d\mu^-(x) = \log d
$$

Then since $h_{\mu^-} = \log d - \int_\Lambda \mathop{\sum}\limits_{\lambda_i(x) <0} \lambda_i(x) d \mu^-(x)$, it follows that
$$
h_{\mu^-} = H_{\mu^-}(\epsilon/f^{-1}\epsilon) - \int_\Lambda \mathop{\sum}\limits_{i, \lambda_i(x) < 0} \lambda_i(x) m_i(x) d\mu^-(x)
$$
Hence from \cite{PDL} we obtain that $\mu^-$ has absolutely continuous conditional measures on local stable manifolds.
\end{proof}

The question whether a measure-preserving dynamical system is 2-sided or 1-sided Bernoulli is an important one and has been solved in a number of cases (see for example \cite{Ma}, \cite{BH}, \cite{Ru-exp}). In our case we show that the inverse SRB measure $\mu^-$ on the repellor $\Lambda$, does not form a 1-sided Bernoulli system, by contrast with the usual SRB measures on attractors of diffeomorphisms (which is 2-sided Bernoulli).

\begin{thm}\label{Bernoulli}

Let $f$ as above and $\Lambda$ a connected repellor as in Theorem
\ref{cond} so that $f$ is not invertible on $\Lambda$. Then
$(\Lambda, f|_\Lambda, \mathcal{B}(\Lambda), \mu^-)$ cannot be
one-sided Bernoulli.

\end{thm}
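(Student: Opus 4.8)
The plan is to use the inverse Pesin formula from Theorem \ref{cond} together with the observation that a one-sided Bernoulli shift forces the Jacobian structure to be trivial in a way that contradicts the presence of negative Lyapunov exponents. More precisely, if $(\Lambda, f|_\Lambda, \mathcal{B}(\Lambda), \mu^-)$ were one-sided Bernoulli, it would be (measurably) conjugate to a one-sided Bernoulli shift $\sigma$ on a product space $(Y^{\mathbb N}, \nu^{\mathbb N})$; such a shift is $d$-to-1 (matching the $d$-to-1 property of $f$ on $\Lambda$), and its Jacobian with respect to the Bernoulli measure is constant equal to $d$, so $H_{\mu^-}(\epsilon/f^{-1}\epsilon) = \log d$, consistent with what we already proved. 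The sharper fact to exploit is that for a one-sided Bernoulli shift the entropy equals the folding entropy: $h_\sigma = H(\epsilon/\sigma^{-1}\epsilon) = \log d$. Transporting this via the conjugacy gives $h_{\mu^-}(f) = \log d$.

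First I would recall from Theorem \ref{cond} that $\mu^-$ satisfies
$$
h_{\mu^-}(f) = \log d - \int_\Lambda \mathop{\sum}\limits_{i, \lambda_i(x) < 0} \lambda_i(x) m_i(x)\, d\mu^-(x).
$$
Combining this with $h_{\mu^-}(f) = \log d$ would force
$$
\int_\Lambda \mathop{\sum}\limits_{i, \lambda_i(x) < 0} \lambda_i(x) m_i(x)\, d\mu^-(x) = 0.
$$
Since each $\lambda_i(x) < 0$ term in the sum is strictly negative with positive multiplicity, the integrand is $\le 0$, so it must vanish $\mu^-$-almost everywhere; this means $\mu^-$ has no negative Lyapunov exponents. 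But $f$ is hyperbolic on $\Lambda$ with a genuine stable direction (the stable bundle $E^s$ is nontrivial precisely because $f$ is \textbf{not} expanding — this is built into our standing hypotheses, and indeed we showed in the proof of Theorem \ref{physical} that local stable manifolds of positive size exist and lie in $\Lambda$). Hence $\dim E^s_x \ge 1$ and the stable Lyapunov exponents are uniformly negative (bounded above by $\log \lambda_s < 0$ in the adapted metric), so $\int_\Lambda \sum_{i, \lambda_i(x)<0} \lambda_i(x) m_i(x)\, d\mu^- \le \log\lambda_s < 0$, a contradiction. Therefore the system cannot be one-sided Bernoulli.

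The main obstacle is justifying the identity $h_{\mu^-}(f) = \log d$ from the one-sided Bernoulli assumption, i.e. that for a one-sided Bernoulli shift on $d$ symbols (or more generally with an atomless base giving a $d$-to-1 map, which is the relevant case here since $f$ is $d$-to-1) the Kolmogorov--Sinai entropy coincides with the folding entropy $H(\epsilon/f^{-1}\epsilon)$. For a Bernoulli shift $\sigma$ on $(Y^{\mathbb N}, \nu^{\mathbb N})$ the partition into the zeroth coordinate is a one-sided generator, and $h_\sigma = H_\nu(\text{zeroth coord})$; on the other hand $\sigma^{-1}\epsilon$ is exactly the partition by the "shifted" coordinates $(y_1, y_2, \dots)$, so $I(\epsilon/\sigma^{-1}\epsilon)$ is the information of the zeroth coordinate given the rest, which by independence equals $H_\nu(\text{zeroth coord})$ as well; hence $h_\sigma = H_\sigma(\epsilon/\sigma^{-1}\epsilon)$. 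Since entropy and folding entropy are both isomorphism invariants of the measure-preserving endomorphism, a conjugacy transports this equality to $(\Lambda, f, \mu^-)$, and we already computed $H_{\mu^-}(\epsilon/f^{-1}\epsilon) = \log d$ in Theorem \ref{cond}. One should be slightly careful that the conjugacy is only an almost-everywhere measure isomorphism, but folding entropy $H(\epsilon/f^{-1}\epsilon)$ depends only on the measure-theoretic structure, so this is not a real difficulty; the argument goes through and yields the contradiction.
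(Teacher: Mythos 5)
Your proof is correct, but it reaches the contradiction by a different mechanism than the paper. The paper argues directly at the level of entropy bounds: if $(\Lambda, f|_\Lambda, \mu^-)$ were isomorphic to a one-sided Bernoulli shift on $m$ symbols, then $m=d$ (matching the constant preimage count of the two systems), so $h_{\mu^-}=h_{\mu_p}\le h_{top}(\sigma_m)=\log d$; this is then played against the inverse Pesin formula, which, since $\Phi^s<0$ on $\Lambda$, gives $h_{\mu^-}=\log d-\int_\Lambda \Phi^s\, d\mu^->\log d$. You instead use the folding entropy: for any one-sided Bernoulli shift, independence gives $h_\sigma=H(\epsilon/\sigma^{-1}\epsilon)$, and since both entropy and folding entropy are isomorphism invariants and $H_{\mu^-}(\epsilon/f^{-1}\epsilon)=\log d$ was computed in the proof of Theorem \ref{cond} (via the constancy of the Jacobian), you conclude $h_{\mu^-}=\log d$ and contradict the same Pesin formula. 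The two routes end at the identical inequality clash, but yours buys something: it does not need the paper's identification of the number of Bernoulli symbols with $d$ (a step the paper justifies only informally), and it rules out Bernoulli shifts with countable or atomless base automatically, since their folding entropy could not equal the finite value $\log d$ unless the base is purely atomic with the independence identity still forcing $h=\log d$. Two small cautions: the remark in your opening paragraph that the Bernoulli model is $d$-to-1 with Jacobian constant equal to $d$ is not something you may assert a priori (it only follows after transporting the Jacobian constancy of $\mu^-$, and in any case your main argument never uses it), and the phrase about an atomless base giving a $d$-to-1 map is confused and should be dropped; neither affects the validity of the argument. Note also that, exactly as in the paper's proof, your contradiction requires the standing hypothesis that the stable bundle is nontrivial (otherwise $\Phi^s\equiv 0$ and no contradiction arises, e.g. for expanding maps), which you correctly flag.
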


\begin{proof}

Let $(\Sigma_m^+, \sigma_m, \mathcal{B}_m, \mu_p)$ a one-sided Bernoulli shift on $m$ symbols (\cite{Ma}),
where $\mathcal{B}_m$ denotes the $\sigma$-algebra of sets generated by cylinders in $\Sigma_m^+$,
$\sigma_m$ is the shift map, and $\mu_p$ is the $\sigma_m$-invariant measure associated to a probability
vector $p = (p_1, \ldots, p_m)$.

We know from Proposition \ref{constant} that if $\Lambda$ is
connected, then the number of $f$-preimages belonging to $\Lambda$
is constant, and denote it by $d$; we assumed that $d >1$. If $(\Lambda, f|_\Lambda,
\mathcal{B}(\Lambda), \mu^-)$ would be isomorphic to a one-sided
Bernoulli system $(\Sigma_m^+, \sigma_m, \mathcal{B}_m, \mu_p)$,
then $d = m $ since the number of preimages is constant
everywhere, for both systems. But then from the Variational Principle for entropy, we would obtain:
\begin{equation} \label{Ber}
h_{\mu^-} = h_{\mu_p} \le h_{top}(\sigma_m) = \log m = \log d
\end{equation}

On the other hand since $\mu^-$ satisfies the Pesin formula on
$\Lambda$, we get that $h_{\mu^-} = \log d - \int\Phi^s d\mu^-$.
But $\Phi^s <0$ and $\mathcal{C}_f \cap \Lambda = \emptyset$,
hence $h_{\mu^-} > \log d$. This gives a contradiction to
(\ref{Ber}).

\end{proof}

We prove now that, in spite of not being 1-sided Bernoulli, the
inverse SRB measure $\mu^-$ has strong mixing properties on the
repellor $\Lambda$.

Given a transformation $f:M \to M$ we say that an $f$-invariant
probability $\mu$ has \textbf{Exponential Decay of Correlations}
on Holder potentials (\cite{Bo}) if there exists some $\lambda \in
(0, 1)$ such that for every $n \ge 1$: $$ |\int \phi \cdot \psi
\circ f^n d\mu - \int \phi d\mu \cdot \int \psi d\mu| \le C(\phi,
\psi) \lambda^n, $$ for any Holder maps $\phi, \psi \in
\mathcal{C}(M, \mathbb R)$, where $C(\phi, \psi)$ depends only on
the potentials $\phi, \psi$.

\begin{thm}\label{exact}

Let a repellor $\Lambda$ for a smooth endomorphism as in Theorem \ref{physical} and let $\mu^-$ be the unique inverse SRB measure associated. Then
$\mu^-$ has Exponential Decay of Correlations on Holder potentials.

\end{thm}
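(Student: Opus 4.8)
The plan is to transfer the problem to the natural extension $(\hat\Lambda,\hat f)$, where the system is a topologically mixing Smale space, and then invoke the classical theory of equilibrium states for H\"older potentials on such systems (as in \cite{Bo}). First I would reduce decay of correlations on $(\Lambda,f|_\Lambda,\mu^-)$ to decay of correlations on $(\hat\Lambda,\hat f,\hat\mu_s)$, where $\hat\mu_s$ is the unique $\hat f$-invariant lift of $\mu^-=\mu_s$ with $\pi_*\hat\mu_s=\mu^-$. Using $\pi\circ\hat f=f\circ\pi$ one gets, for H\"older $\phi,\psi\in\mathcal{C}(M,\mathbb R)$,
\[
\int\phi\cdot(\psi\circ f^n)\,d\mu^- = \int(\phi\circ\pi)\cdot\bigl((\psi\circ\pi)\circ\hat f^n\bigr)\,d\hat\mu_s,\qquad \int\phi\,d\mu^- = \int\phi\circ\pi\,d\hat\mu_s ,
\]
and likewise for $\psi$; moreover $\phi\circ\pi$ and $\psi\circ\pi$ are H\"older on $\hat\Lambda$ because $\pi$ is Lipschitz. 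Hence it suffices to establish exponential decay of correlations for H\"older observables with respect to the equilibrium state $\hat\mu_s$ of the H\"older potential $\hat\Phi^s:=\Phi^s\circ\pi$ on $\hat\Lambda$.

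Next I would record that $(\hat\Lambda,\hat f)$ is an expansive homeomorphism with a local product structure: its local stable sets are the projections of the $W^s_r(x)\subset\Lambda$ (which, as shown in the proof of Theorem \ref{physical}, lie in $\Lambda$), its local unstable sets are the $W^u_r(\hat x)$, and the bracket map is inherited from the local product structure of the basic set $\Lambda$. Thus $(\hat\Lambda,\hat f)$ is (an abstract) Smale space. Since $\Lambda$ is connected, $\hat\Lambda$ is connected as well (the topology of $\hat\Lambda$ is induced by the product topology from $\Lambda^{\mathbb N}$ and the bonding maps $f|_\Lambda$ are onto, as used already in Proposition \ref{stability}); therefore the spectral decomposition of the transitive Smale space $(\hat\Lambda,\hat f)$ has only one mixing component, i.e.\ $\hat f$ is topologically mixing. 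Consequently $(\hat\Lambda,\hat f)$ admits finite Markov partitions, yielding a H\"older, finite-to-one semiconjugacy $\kappa:\Sigma_A\to\hat\Lambda$ from a topologically mixing subshift of finite type $(\Sigma_A,\sigma)$ that is injective off a residual set.

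Finally I would pull everything back to $\Sigma_A$: the potential $\hat\Phi^s\circ\kappa$ is H\"older, its unique equilibrium state $\nu$ satisfies $\kappa_*\nu=\hat\mu_s$, and by the Ruelle--Perron--Frobenius theorem (spectral gap of the transfer operator on H\"older functions over a mixing SFT) $\nu$ has exponential decay of correlations for H\"older observables. Since $\kappa$ is H\"older, $(\phi\circ\pi)\circ\kappa$ and $(\psi\circ\pi)\circ\kappa$ are H\"older on $\Sigma_A$, and $\kappa\circ\sigma=\hat f\circ\kappa$ together with $\kappa_*\nu=\hat\mu_s$ makes the correlation functionals on $\hat\Lambda$ coincide with those on $\Sigma_A$ for these functions; combining with the first reduction gives $|\int\phi\cdot\psi\circ f^n\,d\mu^- - \int\phi\,d\mu^-\int\psi\,d\mu^-|\le C(\phi,\psi)\lambda^n$ with a uniform $\lambda\in(0,1)$. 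The main obstacle is the part that makes the endomorphism case genuinely different: constructing Markov partitions in the inverse-limit setting and verifying that $\hat f$ is topologically mixing, together with the bookkeeping needed to carry the correlation integrals through the merely finite-to-one coding $\kappa$; all the differential-geometric pathologies on $\Lambda$ (no unstable foliation, no Markov partition, distortion out of control) are sidestepped because $\hat\Lambda$ is hyperbolic purely as a topological dynamical system.
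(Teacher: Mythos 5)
Your proposal follows essentially the same route as the paper: pass to the natural extension $\hat\Lambda$ viewed as a Smale space, use its Markov partitions to obtain a H\"older (Lipschitz) coding by a mixing two-sided subshift of finite type, transport $\Phi^s$ and its unique equilibrium state to the symbolic model, invoke Bowen/Ruelle--Perron--Frobenius exponential mixing there, and push the correlation estimates back through $\pi\circ h$ using that lifted H\"older observables stay H\"older. The only difference is that you justify topological mixing of $\hat f$ via connectedness of $\hat\Lambda$ and the spectral decomposition, a point the paper asserts without comment, so your write-up is, if anything, slightly more complete.
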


\begin{proof}

Since we have a uniformly hyperbolic structure for the endomorphism $f$ on $\Lambda$, we can associate to it a Smale
space structure on the natural extension $\hat \Lambda$ (\cite{Ru-carte}). Therefore on $\hat \Lambda$ there exist
 Markov partitions of arbitrarily small diameter (\cite{Ru-carte}). Now these Markov partitions imply the existence
 of a semi-conjugacy $h$ with a 2-sided mixing Markov chain $\Sigma_A$. We have therefore the Lipschitz continuous maps
$$
h: \Sigma_A \to \hat \Lambda, \text{and} \  \pi: \hat \Lambda \to \Lambda
$$
such that $\pi \circ \hat f = f \circ \pi, h \circ \sigma_A = \hat f \circ h$, where $\sigma_A$ is the shift homeomorphism.

Now, since the stable potential $\Phi^s$ on $\Lambda$ is Holder continuous,
it follows that $\Psi^s:= \Phi^s \circ \pi \circ h: \Sigma_A \to \mathbb R$ is Holder continuous
 and to the unique equilibrium measure $\mu_s$ of $\Phi^s$ it corresponds the
unique equilibrium measure $\nu$ of $\Psi^s$ on $\Sigma_A$, s.t $\mu_s = (\pi \circ h)_* \nu$. We have that $P_f(\Phi^s) = P_{\sigma_A}(\Psi^s)$ and $h_{\mu_s}(f) = h_{\nu}(\sigma_A)$.
Also notice that
$$\int_\Lambda \phi d\mu_s = \int _{\Sigma_A} \phi \circ \pi \circ h \ d\nu, \phi \in \mathcal{C}(\Lambda)
$$
Now we do have Exponential Decay of Correlations for Holder potentials for $(\Sigma_A, \nu)$ (for example \cite{Bo}); so the same holds for $f|_\Lambda$  and the equilibrium measure $\mu_s$. Recalling that we denoted $\mu^- := \mu_s$, we obtain the conclusion.

\end{proof}

\textbf{Examples:}

\ \textbf{1. Toral endomorphisms.} Let us take an integer valued $m \times m$ matrix $A$ with $\text{det}(A) \ne 1$.
This matrix induces a toral endomorphism $f_A: \mathbb T^m \to \mathbb T^m$.
This toral endomorphism transforms the unit square into a parallelogram in $\mathbb R^m$ of area (Lebesgue measure)
equal to $|\text{det}(A)|$, and whose corners are points having only integer coordinates.
Thus when we project to $\mathbb T^m$, we obtain that $f_A$ is $|\text{det}(A)|$-to-1.
If all eigenvalues of $A$ have absolute values different from 1, then $f_A$ is hyperbolic on the whole torus
$\mathbb T^m$.

Theorem \ref{physical} can be applied in this case, since $\mathbb T^m$ is a connected hyperbolic repellor for $f_A$, and we obtain a physical measure for the multivalued inverse iterates of $f_A$. In this case the inverse SRB measure $\mu^-$ is in fact the Haar measure on $\mathbb T^m$ since the stable potential is constant. Also from Theorem \ref{cond}, we obtain that a Pesin type formula holds for the negative Lyapunov exponents.

\textbf{2. Anosov endomorphisms.} Theorem \ref{physical} and \ref{cond} can be applied also in the case of Anosov endomorphisms on a Riemannian manifold $M$, since $M$ can be viewed as a hyperbolic repellor.
In general the stable potential is not constant and $\mu^-$ is not necessarily absolutely continuous with respect to the Lebesgue measure on $M$.
We obtain again that the asymptotic distribution of preimages for Lebesgue almost every point in $M$ is equal to the equilibrium measure $\mu^- = \mu_s$, and that the inverse SRB measure $\mu^-$ has absolutely continuous conditional measures on local stable manifolds.

\textbf{3. Non-Anosov hyperbolic non-expanding repellors for products.}
Let us take for instance $f:\mathbb P\mathbb C^1 \to \mathbb P\mathbb C^1, f([z_0: z_1]) = [z_0^2: z_1^2]$, and $g:\mathbb T^2 \to \mathbb T^2, g$ being induced by the matrix $A = \left(\begin{array}{ll}
                    2 & 2 \\

                    2 & 3
                  \end{array} \right)$.
We see easily that $A$ has one eigenvalue in $(0, 1)$ and another larger than 1, so $g$ is hyperbolic. We take the product
$$
F: \mathbb P\mathbb C^1 \times \mathbb T^2 \to \mathbb P\mathbb C^1 \times \mathbb T^2, F([z_0: z_1], (x, y)) = (f([z_0: z_1]), g(x, y)) \ \text{and} \ \Lambda:= S^1 \times \mathbb T^2
$$
Then $\Lambda$ is a connected hyperbolic non-Anosov repellor for the smooth endomorphism $F$ and we can apply Theorems \ref{physical} and \ref{cond}.

\textbf{4. Perturbations.}
According to Proposition \ref{stability}, if $f$ is hyperbolic on a connected repellor $\Lambda$ and if an
endomorphism $g$ is a $\mathcal{C}^1$ perturbation of $f$,
then $g$ has a connected hyperbolic repellor denoted $\Lambda_g$ which is close to $\Lambda$.
We can form then a large class of examples by perturbing known examples, like the ones above.
Then since $g$ is hyperbolic on $\Lambda_g$ we can again apply Theorems \ref{physical} and \ref{cond}, this time for inverse SRB measures which might be more complicated than in the original (unperturbed) example.
For instance, let us take $F: \mathbb P\mathbb C^1 \times \mathbb T^2 \to \mathbb P\mathbb C^1 \times \mathbb T^2$ given by
$$
F([z_0:z_1], (x, y)) = ([z_0^2:z_1^2],  f_A(x, y)),
$$
where $f_A$ is the toral endomorphism induced by the matrix $A=\left(\begin{array}{ll}
                    2 & 1 \\

                    2 & 2

                  \end{array} \right)$.
As can be seen, $F$ has a connected hyperbolic repellor $\Lambda:= S^1 \times \mathbb T^2$.
Consider the following perturbation of $F$,  $F_\vp: \mathbb P\mathbb C^1 \times \mathbb T^2 \to \mathbb P\mathbb C^1 \times \mathbb T^2$ given by:
$$
F_\vp([z_0:z_1], (x, y)) = \left([z_0^2 + \vp z_1^2 \cdot e^{2\pi i (2x+y)}:z_1^2], (2x+y+\vp sin(2\pi(x+y)),
2x+2y+\vp cos^2(4\pi x))\right)
$$
It can be seen that $F_\vp$ is well defined as a smooth endomorphism on
$\mathbb P\mathbb C^1 \times \mathbb T^2$ and that it is a $\mathcal{C}^1$ perturbation of $F$.
It follows from Proposition \ref{stability} that $F_\vp$ has a connected hyperbolic repellor $\Lambda_\vp$ (on which
$F_\vp$ has both stable as well as unstable directions), and that $\Lambda_\vp$ is close to $\Lambda$.
However $\Lambda_\vp$ is different from $\Lambda$, and it has a complicated structure with self-intersections; its
projection on the second coordinate is $\mathbb T^2$.
For this repellor $\Lambda_\vp$ we can apply Theorem \ref{physical} to get a physical measure $\mu^-_\vp$ for the
local inverse iterates of $F_\vp$. This physical measure $\mu^-_\vp$ is the equilibrium measure of the non-constant
stable potential
$$\Phi^s_\vp([z_0:z_1], (x, y)) := \log |\text{det}(DF_\vp)_s([z_0:z_1], (x, y))|, \text{for} \ ([z_0:z_1], (x, y)) \in
\Lambda_\vp$$
We know from Theorem \ref{cond} that the conditional measures of the inverse SRB measure $\mu^-_\vp$ on the local stable
manifolds (which are contained in the repellor $\Lambda_\vp$), are absolutely continuous with respect to
the induced Lebesgue measures.

Also a Pesin type formula is true for the measure-theoretic
entropy $h_{\mu^-_\vp}$ of $\mu^-_\vp$, and the negative Lyapunov
exponents (which are non-constant if $\vp \ne 0$).$\hfill \square$

\

Similarly one can perturb other connected hyperbolic repellors to obtain new
 dynamical systems for which Theorems \ref{physical} and \ref{cond}, as well as Corollary \ref{subseq} can be applied.

Another observation is that one can form repellors quite easily.
We need only the existence of families of stable/unstable cones in
some open set $U$ and the topological condition $\bar U \subset
f(U)$. Then one can form the basic set $\Lambda:=
\mathop{\cap}\limits_{n \in \mathbb Z} f^n(U)$, on which we have a
hyperbolic structure. The inverse SRB measure $\mu^-$ supported on
$\Lambda$  can be approximated Lebesgue almost everywhere on $U$,
like in Theorem \ref{physical}, and will have good ergodic
properties as found in Theorem \ref{exact}. However it may be
difficult to describe this measure explicitly, especially in the
non-Anosov case, since $(\Lambda, \mu^-)$ is not 1-sided
Bernoulli.

Institute of Mathematics of the Romanian Academy, P. O. Box 1-764, RO 014700, Bucharest, Romania.

\textbf{Email:} Eugen.Mihailescu\@@imar.ro

\textbf{Webpage:}  www.imar.ro/~mihailes

\end{document}